\documentclass[12pt]{amsart}

\usepackage{amsthm}
\usepackage{amssymb}
\usepackage{amsmath}

\textwidth=15cm
\textheight=21cm
\hoffset=-1.3cm
\baselineskip=18pt plus 3pt
\numberwithin{equation}{section}
\newtheorem{thm}{Theorem}[section]
\newtheorem{prop}[thm]{Proposition}
\newtheorem{lem}[thm]{Lemma}
\newtheorem{cor}[thm]{Corollary}

\theoremstyle{definition}

\begin{document}

\title[Restricted sum formula for $q$MZV]
{A restricted sum formula for \\ a $q$-analogue of multiple zeta values}
\author{Yoshihiro Takeyama}
\address{Division of Mathematics, 
Faculty of Pure and Applied Sciences, 
University of Tsukuba, Tsukuba, Ibaraki 305-8571, Japan}
\email{takeyama@math.tsukuba.ac.jp}

\dedicatory{\it Dedicated to Professor Michio Jimbo on his sixtieth birthday} 

\begin{abstract}
We prove a new linear relation for a $q$-analogue of multiple zeta values.  
It is a $q$-extension of the restricted sum formula obtained by 
Eie, Liaw and Ong for multiple zeta values. 
\end{abstract}
\maketitle

\setcounter{section}{0}
\setcounter{equation}{0}


\section{Introduction}

Let $\alpha=(\alpha_{1}, \ldots , \alpha_{r})$ be a multi-index of positive integers.  
We call the values $r$ and $\sum_{i=1}^{r}\alpha_{i}$ 
{\it depth} and {\it weight} of $\alpha$, respectively. 
If $\alpha_{1}\ge 2$, we say that $\alpha$ is {\it admissible}. 
For an admissible index $(\alpha_{1}, \ldots , \alpha_{r})$,  
{\it multiple zeta value} (MZV) is defined by 
\begin{align*}
\zeta(\alpha_{1}, \ldots , \alpha_{r}):=\sum_{m_{1}>\cdots >m_{r}>0}
\frac{1}{m_{1}^{\alpha_{1}} \cdots m_{r}^{\alpha_{r}}}.   
\end{align*} 
Let $I_{0}(r, n)$ be the set of admissible indices of depth $r$ and weight $n$. 
In \cite{ELO}, Eie, Liaw and Ong proved the following relation called  
a restricted sum formula: 
\begin{align}
\sum_{\alpha \in I_{0}(b, n)}\zeta(\alpha_{1}, \ldots , \alpha_{b}, 1^{a})=
\sum_{\beta \in I_{0}(a+1, a+b+1)}\zeta(\beta_{1}+n-b-1, \beta_{2}, \ldots , \beta_{a+1}),   
\label{eq:res-sum-original}
\end{align}
where $a \ge 0, b\ge 1, n\ge b+1$ and $1^{a}$ is an abbreviation of the subsequence
$(1, \ldots , 1)$ of length $a$. 
It is a generalization of the sum formula proved in \cite{Gr, Za}, 
which is the equality \eqref{eq:res-sum-original} with $a=0$.   

In this paper we prove a $q$-analogue of the restricted sum formula. 
Let $0<q<1$. 
For an admissible index $\alpha=(\alpha_{1}, \ldots , \alpha_{r})$, 
a {\it $q$-analogue of multiple zeta value} ($q$MZV) \cite{Bnote, KKW, Z} is defined by 
\begin{align*}
\zeta_{q}(\alpha_{1}, \ldots , \alpha_{r}):=\sum_{m_{1}>\cdots >m_{r}>0}
\frac{q^{(\alpha_{1}-1)m_{1}+\cdots +(\alpha_{r}-1)m_{r}}}
{[m_{1}]^{\alpha_{1}} \cdots [m_{r}]^{\alpha_{r}}},    
\end{align*}
where $[n]$ is the $q$-integer $[n]:=(1-q^{n})/(1-q)$. 
In the limit $q \to 1$, $q$MZV converges to MZV. 
The main theorem of this article claims that 
$q$MZV's also satisfy the restricted sum formula: 

\begin{thm}\label{thm:main}
For any integers $a\ge 0, b\ge 1$ and $n\ge b+1$, it holds that 
\begin{align}
\sum_{\alpha \in I_{0}(b, n)}\zeta_{q}(\alpha_{1}, \ldots , \alpha_{b}, 1^{a})=
\sum_{\beta \in I_{0}(a+1, a+b+1)}\zeta_{q}(\beta_{1}+n-b-1, \beta_{2}, \ldots , \beta_{a+1}).  
\label{eq:main}
\end{align} 
\end{thm}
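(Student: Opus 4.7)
The plan is to reduce both sides of \eqref{eq:main} to single multiple $q$-series by summing over the admissible indices first, and then to prove the resulting equality of iterated sums by induction on $b$. The key algebraic tool for the reduction is the identity
\begin{equation*}
\sum_{\alpha \in I_{0}(r, n)} \prod_{i=1}^{r} z_i^{\alpha_i - 1} = z_1 \, h_{n-r-1}(z_1, \ldots, z_r),
\end{equation*}
where $h_k$ denotes the complete homogeneous symmetric polynomial of degree $k$; this follows at once by setting $\gamma_1 = \alpha_1 - 2$ and $\gamma_i = \alpha_i - 1$ for $i \ge 2$, which puts admissibility into the form $\gamma_i \ge 0$ with $\sum_i \gamma_i = n - r - 1$. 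Substituting $z_i = q^{m_i}/[m_i]$, summing over $m_1 > \cdots > m_r > 0$, and separately collapsing the trailing ones on the LHS, the theorem reduces to showing
\begin{equation*}
\sum_{m_1 > \cdots > m_{b+a} > 0} \frac{x_1 \, h_{n-b-1}(x_1, \ldots, x_b)}{[m_1] \cdots [m_{b+a}]}
= \sum_{M_1 > \cdots > M_{a+1} > 0} \frac{Y_1^{n-b} \, h_{b-1}(Y_1, \ldots, Y_{a+1})}{[M_1] \cdots [M_{a+1}]},
\end{equation*}
where $x_i = q^{m_i}/[m_i]$ and $Y_i = q^{M_i}/[M_i]$.

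Next, I would prove this reduced identity by induction on $b$. The base case $b = 1$ is immediate: since $h_{n-2}(x_1) = x_1^{n-2}$ and $h_0 \equiv 1$, both sides collapse to $\zeta_q(n, 1^a)$. For the inductive step, the plan is to apply the recursion $h_k(z_1, \ldots, z_b) = z_1 \, h_{k-1}(z_1, \ldots, z_b) + h_k(z_2, \ldots, z_b)$ to split the left-hand sum, and then to exploit the $q$-partial-fraction identity stemming from $[M] - [m] = q^m [M - m]$ to telescope one of the summation variables. This should lower either the degree of the symmetric polynomial or the number of summation indices, matching the form required by the inductive hypothesis.

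The main obstacle will be the inductive step itself: after telescoping, the factor $Y_1^{n-b}$ and the polynomial $h_{b-1}$ on the right must be reconciled with the reduced left-hand expression, and the combinatorial bookkeeping of the shifts in weight and depth is delicate. A potentially cleaner alternative is to package both sides into generating functions in an auxiliary variable $t$: the polynomial $h_{n-b-1}(x_1, \ldots, x_b)$ is the coefficient of $t^{n-b-1}$ in $\prod_{i=1}^{b}(1 - t x_i)^{-1}$, while $Y_1^{n-b}$ is the coefficient of $t^{n-b}$ in $(1 - t Y_1)^{-1}$, so proving the identity of the two generating functions simultaneously across all $n$ may be more transparent than tracking the induction term by term.
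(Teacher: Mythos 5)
Your reduction to the identity between sums weighted by complete homogeneous symmetric polynomials is correct, but it is essentially a restatement of \eqref{eq:main}: writing $\sum_{\alpha\in I_{0}(r,n)}\prod_{i}x_{i}^{\alpha_{i}-1}=x_{1}h_{n-r-1}(x_{1},\ldots,x_{r})$ is a one-line change of variables on each side, and the base case $b=1$ collapses to $\zeta_{q}(n,1^{a})=\zeta_{q}(n,1^{a})$. All of the mathematical content of the theorem sits in the inductive step, which you do not carry out; you yourself flag it as the main obstacle, and the generating-function alternative is mentioned but not executed. As it stands, this is a plan rather than a proof.

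The gap is not mere bookkeeping. When you telescope a summation variable using $[m_{1}]-[m_{2}]=q^{m_{2}}[m_{1}-m_{2}]$, two things happen that leave the class of sums appearing in your reduced identity. First, you generate factors in \emph{difference} variables, such as $1/[m_{j}-m_{b}]$ and $q^{m_{1}-m_{b}}/[m_{1}-m_{b}]$, and these must be pushed through the harmonic tail $\prod_{j}1/[m_{j}]$ coming from the trailing $1^{a}$; this is exactly the content of Lemma \ref{lem:f-through} and Proposition \ref{prop:contraction} in the paper, and it is why the operators $\varphi_{s}$ are introduced. Second, the telescoping produces weak-inequality (star) sums and boundary terms built from $q^{km}/[m]^{k}$ (the letters $\xi_{k}$), whose $q\to 1$ limits include divergent harmonic sums and which lie outside the span of $q$MZVs; one must prove that these terms ultimately cancel, which is the content of the algebraic apparatus $(\rho, d, \Phi_{\ell}, Z_{s})$ and Propositions \ref{prop:main-2}--\ref{prop:main-3}. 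Your inductive hypothesis, formulated purely in terms of the two displayed iterated sums, cannot absorb either phenomenon, so the claim that the step ``should lower the degree or the number of indices'' does not go through as written; for $a\ge 1$ the telescoped expression is simply not of the form required by the hypothesis. To complete your approach you would need an enlarged class of sums closed under the telescoping (admitting difference factors and the $q^{km}/[m]^{k}$ terms) together with cancellation identities playing the role of Propositions \ref{prop:main-2} and \ref{prop:main-1}, which is essentially what the paper's algebra $\mathfrak{d}$ provides.
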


Setting $a=0$ we recover the sum formula for $q$MZV obtained by Bradley \cite{B}. 

The strategy to prove Theorem \ref{thm:main} is similar to that of 
the proof for MZV's. 
However we should overcome some new difficulties. 
In the calculation of the $q$-analogue case, 
some additional terms are of the form 
$\sum_{n=1}^{\infty}q^{kn}/[n]^{k} \, (k\in\mathbb{Z}_{\ge 1})$.  
In the limit of $q \to 1$, it becomes a harmonic sum $\sum 1/n^{k}$, 
but it is beyond a class of $q$-series described by $q$MZV's. 
To control such terms we make use of algebraic formulation 
of multiple harmonic series given in Section \ref{subsec:algebra}. 
We introduce a noncommutative polynomial algebra $\mathfrak{d}$ 
which is an extension of the algebra used in the proof of 
a $q$-analogue of Kawashima's relation for MZV \cite{qKaw}.  
Then the proof of Theorem \ref{thm:main} is reduced to 
an algebraic calculation in $\mathfrak{d}$ as will be seen in Section \ref{subsec:proof1}.  
We proceed the algebraic computation in Section \ref{subsec:proof2} and 
finish the proof of Theorem \ref{thm:main}.   

Throughout this article we assume that $0<q<1$. 
We denote the set of multi-indices of positive integers, including non-admissible ones, 
of depth $r$ and weight $n$ by $I(r, n)$. 
 
\section{Proof}

\subsection{Summation over indices}

For $b\ge 1, n \ge 2$ and $M \in \mathbb{Z}_{\ge 1}$, define 
\begin{align*}
K_{b, n}(M):=\sum_{\alpha \in I_{0}(b, n)}
\sum_{m_{1}>m_{2}>\cdots >m_{b-1}>m_{b}=M}
\prod_{j=1}^{b}\frac{q^{(\alpha_{j}-1)m_{j}}}{[m_{j}]^{\alpha_{j}}}.  
\end{align*}
Since $\alpha_{1} \ge 2$, the infinite sum in the right hand side is convergent. 
Note that $K_{1, n}(M)=q^{(n-1)M}/[M]^{n}$. 

For positive integers $\ell, \beta, M$ and $N \, (N>M)$, we set 
\begin{align*}
& 
f_{\ell}(N, M):=\sum_{N=k_{1}>k_{2}>\cdots >k_{\ell}>M}\frac{q^{k_{1}-M}}{[k_{1}-M]}
\prod_{j=2}^{\ell}\frac{1}{[k_{j}-M]},  \\
& 
g_{\ell, \beta}(M):=\sum_{M=m_{1}\ge m_{2}\ge \cdots \ge m_{\ell}\ge 1}
\frac{q^{(\beta-1)m_{1}}}{[m_{1}]^{\beta}} 
\prod_{j=2}^{\ell}\frac{q^{m_{j}}}{[m_{j}]}. 
\end{align*}
We set $f_{\ell}(N, M)=0$ unless $N>M$. 
Note that $g_{1, \beta}(M)=K_{1, \beta}(M)$. 

\begin{lem}\label{lem:KKf}
For $M\ge 1$, $b\ge 1$ and $n\ge 2$, it holds that 
\begin{align*}
K_{b, n}(M)=g_{b, n-b+1}(M)-\sum_{s=1}^{b-1}\sum_{N=M+1}^{\infty}K_{b-s, n-s}(N)f_{s}(N, M).  
\end{align*}
\end{lem}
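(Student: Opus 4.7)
The plan is to prove the lemma by induction on $b$, using the partial fraction identity
\[
\frac{1}{[m][M]} = \frac{1}{[m-M]}\Bigl(\frac{1}{[M]} - \frac{q^{m-M}}{[m]}\Bigr) \qquad (m > M),
\]
equivalent to the $q$-integer identity $[N] = [N-M] + q^{N-M}[M]$, as the main algebraic tool.

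The base case $b=1$ is immediate: the only admissible index is $(n)$, so $K_{1,n}(M) = q^{(n-1)M}/[M]^n = g_{1,n}(M)$ and the extra sum is empty. For the inductive step, I would isolate the innermost index and variable by setting $\alpha_b = \alpha$ and $m_{b-1} = N$ to obtain
\[
K_{b,n}(M) = \sum_{\alpha=1}^{n-b}\frac{q^{(\alpha-1)M}}{[M]^{\alpha}}\sum_{N=M+1}^{\infty} K_{b-1,\,n-\alpha}(N),
\]
then apply the inductive hypothesis to each $K_{b-1,\,n-\alpha}(N)$. This produces a combination of $g_{b-1}$-type terms and nested $K\cdot f$ corrections, which must be shown equal to $g_{b,n-b+1}(M) - \sum_{s=1}^{b-1}\sum_{N>M} K_{b-s,n-s}(N)f_s(N,M)$.

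To match the two sides, I would apply the partial fraction identity to each mixed factor involving both $[N]$ and $[M]$ that arises from the $g_{b-1}$-type pieces, splitting it into a pure-$[M]$ part and a pure-$[N-M]$ part. The pure-$[M]$ parts, after summation over $N>M$ and telescoping via
\[
\sum_{N=M+1}^{\infty}\frac{q^N}{[N][N-M]} = \frac{q^M}{[M]}\sum_{k=1}^{M}\frac{q^k}{[k]},
\]
rebuild the weak-chain sum defining $g_{b,n-b+1}(M)$, effectively converting a strict chain above $M$ into a weak chain at or below $M$. The pure-$[N-M]$ parts furnish the leading $f_1$ factor needed for the $s=b-1$ term of the target extra sum.

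The main obstacle is the combinatorial bookkeeping: the inductive hypothesis produces $K\cdot f$ corrections indexed by $s \in \{1,\ldots,b-2\}$, whereas the target extra sum runs over $s \in \{1,\ldots,b-1\}$. The missing $s=b-1$ piece is generated precisely by the partial fraction split of the $g_{b-1}$ contribution, while the remaining inductive corrections, after reindexing their inner bridge variable $N'$ as the new outer $N$, align with the $s=1,\ldots,b-2$ pieces of the target. Carrying out this alignment in full, together with the iterated telescoping of the pure-$[M]$ parts down to a weak chain of length $b-1$, is the technical core of the argument.
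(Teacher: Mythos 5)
Your base case is right and your two tools (the $q$-partial-fraction identity $[N]=[N-M]+q^{N-M}[M]$ and the telescoping sum $\sum_{N>M}q^{N}/([N][N-M])=(q^{M}/[M])\sum_{k=1}^{M}q^{k}/[k]$) are exactly the ones the argument needs, but the inductive step --- which you yourself call the technical core --- is asserted rather than proved, and the way you claim the books balance is not what actually happens. After inserting the induction hypothesis, a correction term has the shape
\[
\frac{q^{(\alpha-1)M}}{[M]^{\alpha}}\,K_{b-1-s,\,n-\alpha-s}(N')\,f_{s}(N',N),
\]
still summed over $\alpha$ and over the bridge variable $N$ with $M<N<N'$. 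The target terms $K_{b-s',n-s'}(N)f_{s'}(N,M)$ contain no power of $1/[M]$ at all (only differences $[k_{j}-M]$), have a $K$-factor of fixed weight $n-s'$ rather than an $\alpha$-dependent one, and have all difference factors anchored at $M$, whereas yours are anchored at $N$. So ``reindexing the inner bridge variable'' cannot align the two: you must first perform the finite sum over $\alpha$ (pairing the $[M]$-power against the $[N']$-power, which is precisely the two-index geometric-sum identity), and then move the anchor of $q^{N'-N}/[N'-N]$ from $N$ to $M$; this last step is not a summand-preserving change of variables, because the leading factor of $f_{s}$ carries a $q$-power while the inner factors $1/[k_{j}-M]$ do not (note $q^{k}/[k]=1/[k]-(1-q)$). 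Already at $b=3$ one sees that a single inductive correction, after the $\alpha$-sum and the partial-fraction split, produces one piece with weight at $N'$ (a candidate for the $s=2$ target term, but with its chain anchored at the wrong point) and another piece with weight at $M$ that can only be absorbed into $g_{3,n-2}(M)$ after further telescoping; neither is a reindexed target term. For the same reason, the $s=b-1$ target term cannot come from the partial-fraction split of the $g_{b-1}$ contribution alone: $f_{b-1}(N,M)$ is a strict chain of differences anchored at $M$, while $g_{b-1}(N)$ supplies a weak chain of factors $q^{m}/[m]$ below $N$.

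None of this says an induction on $b$ is impossible --- the identity is true --- but the matching you describe is where the entire proof lives, and your roadmap for it would not survive being written out. The paper sidesteps the re-anchoring problem: it pulls out the admissibility factor $q^{m_{1}}/[m_{1}]$, sums over all indices of weight $n-1$, and repeatedly applies the two-variable identity always pairing a variable with the fixed bottom variable $m_{b}=M$, so every difference factor is anchored at $M$ from the start; the corrections then appear directly in the form $K_{b-s,n-s}(N)f_{s}(N,M)$, and the single surviving main term becomes $g_{b,n-b+1}(M)$ after one change of variables plus the telescoping identity. If you want to keep your bottom-up induction, you would need an analogue of Lemma \ref{lem:f-through} to transport the $f$-factors from anchor $N$ to anchor $M$, and you should expect the $g$-part and the correction parts to mix rather than match up block by block.
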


\begin{proof}
For $k\ge 2$ and $m_{1}>m_{2}$, it holds that 
\begin{align*}
&\sum_{\beta \in I(2, k)}
\frac{q^{(\beta_{1}-1)m_{1}+(\beta_{2}-1)m_{2}}}{[m_{1}]^{\beta_{1}}[m_{2}]^{\beta_{2}}} \\
&=\frac{1}{[m_{1}][m_{2}]}\left(
\left(\frac{q^{m_{1}}}{[m_{1}]}\right)^{k-1}-\left(\frac{q^{m_{2}}}{[m_{2}]}\right)^{k-1} 
\right)/\left(\frac{q^{m_{1}}}{[m_{1}]}-\frac{q^{m_{2}}}{[m_{2}]}\right) \\ 
&=\frac{q^{(k-2)m_{2}}}{[m_{2}]^{k-1}}\frac{1}{[m_{1}-m_{2}]}-
\frac{q^{(k-2)m_{1}}}{[m_{1}]^{k-1}}\frac{q^{m_{1}-m_{2}}}{[m_{1}-m_{2}]}. 
\end{align*} 
Using the above formula repeatedly we get 
\begin{align*}
K_{b, n}(M)&=\sum_{m_{1}>\cdots >m_{b-1}>m_{b}=M}
\frac{q^{m_{1}}}{[m_{1}]}
\sum_{\beta \in I(b, n-1)}
\prod_{j=1}^{b}\frac{q^{(\beta_{j}-1)m_{j}}}{[m_{j}]^{\beta_{j}}}
\\ 
&=\sum_{m_{1}>\cdots >m_{b-1}>m_{b}=M}
\frac{q^{m_{1}}}{[m_{1}]}\left(\prod_{j=1}^{b-1}\frac{1}{[m_{j}-m_{b}]}\right)
\frac{q^{(n-b-1)m_{b}}}{[m_{b}]^{n-b}} \\ 
&-\sum_{s=1}^{b-1}\sum_{m_{b-s}=M+1}^{\infty}K_{b-s, n-s}(m_{b-s})f_{s}(m_{b-s}, M).   
\end{align*}
The first term of the right hand side above is rewritten as follows. 
Setting $m_{j}=\ell_{j}+\cdots +\ell_{b-1}+M \, (j=1, \ldots , b-1)$, we have 
\begin{align*}
& 
\sum_{m_{1}>\cdots >m_{b-1}>m_{b}=M}
\frac{q^{m_{1}}}{[m_{1}]}\left(\prod_{j=1}^{b-1}\frac{1}{[m_{j}-m_{b}]}\right) 
\frac{q^{(n-b-1)m_{b}}}{[m_{b}]^{n-b}} \\ 
&\quad {}=\frac{q^{(n-b-1)M}}{[M]^{n-b}}\sum_{\ell_{1}, \ldots , \ell_{b-1}=1}^{\infty}
\frac{q^{\ell_{1}+\cdots +\ell_{b-1}+M}}{[\ell_{1}+\cdots +\ell_{b-1}+M]}
\prod_{j=1}^{b-1}\frac{1}{[\ell_{j}+\cdots +\ell_{b-1}]}.
\end{align*}
Now take the sum with respect to $\ell_{1}, \ell_{2}, \ldots , \ell_{b-1}$ successively 
using the equality 
\begin{align*}
& 
\sum_{\ell=1}^{\infty}\frac{q^{\ell+m}}{[\ell+m]}\frac{1}{[\ell+n]}=
\sum_{\ell=1}^{\infty}
\left(\frac{q^{\ell+n}}{[\ell+n]}-\frac{q^{\ell+m}}{[\ell+m]}\right)
\frac{q^{m-n}}{[m-n]}=
\frac{q^{m-n}}{[m-n]}\sum_{\ell=1}^{m-n}\frac{q^{\ell+n}}{[\ell+n]}
\end{align*}
which holds for any $m>n$.  
Then we obtain $g_{b, n-b+1}(M)$. 
\end{proof}

Lemma \ref{lem:KKf} implies the following proposition, 
which can be proved by induction on $b$: 

\begin{prop}\label{prop:K}
For positive integers $r, \ell$ and 
$N_{1}>\cdots >N_{r}>M$, set 
\begin{align}
h_{r, \ell}(N_{1}, \ldots , N_{r}, M):=
\sum_{c \in I(r, \ell)}
\left(\prod_{j=1}^{r-1}f_{c_{j}}(N_{j}, N_{j+1})\right)
f_{c_{r}}(N_{r}, M).
\label{eq:def-hrs} 
\end{align}
Then 
\begin{align}
&K_{b, m}(M)=g_{b, m-b+1}(M) \label{eq:propK} \\ 
&+\sum_{\ell=1}^{b-1}\sum_{r=1}^{\ell}(-1)^{r} 
\sum_{N_{1}>N_{2}>\cdots >N_{r}>M}\!\!\!
g_{b-\ell, m-b+1}(N_{1})\, 
h_{r, \ell}(N_{1}, \ldots , N_{r}, M)
\nonumber 
\end{align} 
for $b \ge 1, m \ge 2$ and $M\ge 1$. 
\end{prop}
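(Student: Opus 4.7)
The plan is to prove the proposition by induction on $b$, using Lemma \ref{lem:KKf} as the recurrence that drives the induction and unfolds the $K_{b-s,m-s}(N)$ terms appearing on its right-hand side.

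For the base case $b=1$, the outer sum $\sum_{\ell=1}^{b-1}$ in \eqref{eq:propK} is empty, and the identity reduces to $K_{1,m}(M)=g_{1,m}(M)$, which holds because $K_{1,m}(M)=q^{(m-1)M}/[M]^m$ and $g_{1,m}(M)=K_{1,m}(M)$ by the remark just before Lemma \ref{lem:KKf}.

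For the inductive step, I apply Lemma \ref{lem:KKf} to $K_{b,m}(M)$ and then substitute the induction hypothesis into each $K_{b-s,m-s}(N)$ for $1\le s\le b-1$. Note that in the induction hypothesis for $K_{b-s,m-s}$ the weight parameter becomes $(m-s)-(b-s)+1=m-b+1$, so the argument of $g$ is uniformly $g_{\bullet,m-b+1}$ throughout. This substitution produces two kinds of contributions: a ``$g$-only'' part coming from the leading $g_{b-s,m-b+1}(N)$ in the induction hypothesis, and an ``$h$-part'' coming from the double sum in the hypothesis involving $h_{r',\ell'}$.

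The $g$-only contribution is $-\sum_{s=1}^{b-1}\sum_{N>M} g_{b-s,m-b+1}(N)\,f_s(N,M)$; since $h_{1,\ell}(N_1,M)=f_\ell(N_1,M)$ (the only index in $I(1,\ell)$ is $(\ell)$), this is exactly the $r=1$ piece of the target formula with $\ell=s$. For the $h$-part, I reindex by $r=r'+1$ and $\ell=\ell'+s$, and use the key identity
\begin{equation*}
h_{r-1,\ell-s}(N_1,\ldots,N_{r-1},N_r)\,f_s(N_r,M)=\!\!\sum_{\substack{c\in I(r,\ell)\\ c_r=s}}\!\Big(\prod_{j=1}^{r-1}f_{c_j}(N_j,N_{j+1})\Big)f_{c_r}(N_r,M),
\end{equation*}
which follows directly from definition \eqref{eq:def-hrs} by splitting off the last component of $c$. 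Summing over $s$ from $1$ to $\ell-r+1$ (the admissible range for the last coordinate of an element of $I(r,\ell)$) recovers the full sum $h_{r,\ell}(N_1,\ldots,N_r,M)$, and the overall sign $(-1)\cdot(-1)^{r-1}=(-1)^r$ matches the target. Collecting the $r=1$ and $r\ge 2$ pieces yields \eqref{eq:propK}.

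The main obstacle is purely combinatorial bookkeeping: keeping track of the double substitution $(s,r',\ell')\leftrightarrow (r,\ell)$, verifying that the ranges of summation transform correctly (in particular that $r$ runs up to $\ell$ and $s$ runs over $1\le s\le \ell-r+1$), and confirming that the partial sums over $c_r=s$ assemble into the full multi-index sum defining $h_{r,\ell}$. Once the reindexing is done carefully, the induction closes with no analytic input beyond Lemma \ref{lem:KKf}.
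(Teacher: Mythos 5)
Your proof is correct and follows exactly the route the paper intends: the paper gives no details beyond saying the proposition "can be proved by induction on $b$" from Lemma \ref{lem:KKf}, and your substitution of the induction hypothesis, the identification $h_{1,\ell}=f_\ell$, and the reindexing $(s,r',\ell')\mapsto(r,\ell)$ with the splitting of $c\in I(r,\ell)$ according to its last component $c_r=s$ supply precisely the omitted bookkeeping.
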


Multiply $K_{b, m}(M)$ by the harmonic sum 
\begin{align}
\sum_{M>m_{1}>\cdots >m_{a}>0}\prod_{j=1}^{a}\frac{1}{[m_{j}]}
\label{eq:harmonic-tail}
\end{align}
and take the sum over all $M\ge 1$. 
Then we get the left hand side of \eqref{eq:main}. 
In order to carry out the same calculation for the right hand side of \eqref{eq:propK},  
we prepare an algebraic formulation for multiple harmonic sums.

\subsection{Algebraic structure of multiple harmonic sums}\label{subsec:algebra}

Denote by $\mathfrak{d}$ the non-commutative polynomial algebra over $\mathbb{Z}$ 
freely generated by the set of alphabets 
$S=\{z_{k}\}_{k=1}^{\infty} \cup \{\xi_{k}\}_{k=1}^{\infty}$. 
For a positive integer $m$, set 
\begin{align*}
J_{z_{k}}(m):=\frac{q^{(k-1)m}}{[m]^k}, \qquad 
J_{\xi_{k}}(m):=\frac{q^{km}}{[m]^{k}}.  
\end{align*}
For a word $w=u_{1} \cdots u_{r}\in \mathfrak{d} \, (r \ge 1, u_{i}\in S)$ and 
$M \in \mathbb{Z}_{\ge 1}$, set 
\begin{align*}
A_{w}(M)&:=\sum_{M>m_{1}>\cdots >m_{r}>0}
J_{u_{1}}(m_{1}) \cdots J_{u_{r}}(m_{r}), \\     
A_{w}^{\star}(M)&:=\sum_{M>m_{1}\ge\cdots \ge m_{r}\ge 1}
J_{u_{1}}(m_{1}) \cdots J_{u_{r}}(m_{r}). 
\end{align*}
We extend the maps $w \mapsto A_{w}(M)$ and $w \mapsto A_{w}^{\star}(M)$ to 
the $\mathbb{Z}$-module homomorphisms $A(M), A^{\star}(M): \mathfrak{d} \to \mathbb{R}$ 
by $A_{1}(M)=1, A^{\star}_{1}(M)=1$ and $\mathbb{Z}$-linearity. 
Note that $A_{z_{1}^{a}}(M)$ is equal to the harmonic sum \eqref{eq:harmonic-tail}. 
If $w$ is contained in the $\mathbb{Z}$-linear span of monomials $z_{i_{1}} \cdots z_{i_{r}}$ 
with $i_{1} \ge 2$, 
$A_{w}(M)$ becomes a linear combination of $q$MZV's in the limit $M \to \infty$.      

Denote by $\mathfrak{d}_{\xi}$ the $\mathbb{Z}$-subalgebra of $\mathfrak{d}$ 
generated by $\{\xi_{k}\}_{k=1}^{\infty}$. 
Define a $\mathbb{Z}$-bilinear map 
$\rho : \mathfrak{d}_{\xi} \times \mathfrak{d} \to \mathfrak{d}$ 
inductively by 
$\rho(1, w)=w \, (w \in \mathfrak{d}), \, \rho(v, 1)=v \, (v \in \mathfrak{d}_{\xi})$ and 
\begin{align*}
& 
\rho(\xi_{k}v, z_{\ell}w)=\xi_{k}\,\rho(v, z_{\ell}w)+z_{\ell}\,\rho(\xi_{k}v, w)+z_{k+\ell}\,\rho(v,w), \\ 
& 
\rho(\xi_{k}v, \xi_{\ell}w)=\xi_{k}\,\rho(v, z_{\ell}w)+\xi_{\ell}\,\rho(\xi_{k}v,w)+\xi_{k+\ell}\,\rho(v,w) 
\end{align*}
for $v \in \mathfrak{d}_{\xi}$ and $w \in \mathfrak{d}$. 

\begin{prop}\label{prop:A-prod}
For $v \in \mathfrak{d}_{\xi}, w \in \mathfrak{d}$ and $M\ge 1$, 
we have $A_{v}(M)A_{w}(M)=A_{\rho(v,w)}(M)$. 
\end{prop}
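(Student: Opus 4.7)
The plan is to establish the identity by induction on the total word length $|v| + |w|$, after reducing to monomial $v$ and $w$ by $\mathbb{Z}$-bilinearity of both sides (the map $\rho$ is $\mathbb{Z}$-bilinear by definition, and $A(M)$ is a $\mathbb{Z}$-module homomorphism). The base case, where $v = 1$ or $w = 1$, is immediate from $A_1(M) = 1$ together with $\rho(1, w) = w$ and $\rho(v, 1) = v$.

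For the inductive step, write $v = \xi_k v'$ with $v' \in \mathfrak{d}_{\xi}$, and split into subcases according to whether $w$ starts with $z_\ell$ or $\xi_\ell$. Unfolding the definition of $A$ gives
\begin{align*}
A_{\xi_k v'}(M)\, A_{z_\ell w'}(M) = \sum_{\substack{0 < m < M \\ 0 < n < M}} J_{\xi_k}(m)\, J_{z_\ell}(n)\, A_{v'}(m)\, A_{w'}(n),
\end{align*}
and I would partition the summation domain into the three regions $\{m > n\}$, $\{m < n\}$, and $\{m = n\}$. On $\{m > n\}$, absorb $J_{z_\ell}(n) A_{w'}(n)$ into $A_{z_\ell w'}(m)$ and apply the induction hypothesis to rewrite $A_{v'}(m) A_{z_\ell w'}(m) = A_{\rho(v', z_\ell w')}(m)$, so that this piece contributes $A_{\xi_k \rho(v', z_\ell w')}(M)$. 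The region $\{m < n\}$ is symmetric and produces $A_{z_\ell \rho(\xi_k v', w')}(M)$. For the diagonal $\{m = n\}$, the key identity is
\begin{align*}
J_{\xi_k}(m)\, J_{z_\ell}(m) = \frac{q^{km}}{[m]^k} \cdot \frac{q^{(\ell-1)m}}{[m]^\ell} = \frac{q^{(k+\ell-1)m}}{[m]^{k+\ell}} = J_{z_{k+\ell}}(m),
\end{align*}
which, combined with the induction hypothesis $A_{v'}(m) A_{w'}(m) = A_{\rho(v', w')}(m)$, yields $A_{z_{k+\ell} \rho(v', w')}(M)$. Summing the three contributions matches precisely the three summands in the defining relation for $\rho(\xi_k v', z_\ell w')$. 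The case $w = \xi_\ell w'$ runs in parallel, using instead $J_{\xi_k}(m)\, J_{\xi_\ell}(m) = J_{\xi_{k+\ell}}(m)$, which accounts for the appearance of $\xi_{k+\ell}$ rather than $z_{k+\ell}$ in the second defining relation.

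The argument is really a standard stuffle-type bookkeeping, so the only delicate point is verifying that the two diagonal product identities for the generating symbols $J_{\bullet}$ are precisely consistent with the shape of the third term in each defining relation of $\rho$; this is exactly why the generators have been split into two families $\{z_k\}$ and $\{\xi_k\}$, the former carrying the weight $q^{(k-1)m}$ and the latter $q^{km}$. Once this consistency is noted, no further subtlety appears, and the induction closes cleanly.
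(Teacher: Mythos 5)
Your proof is correct and is essentially the paper's own argument: the same splitting of the double sum over $(m,n)$ into the regions $m>n$, $m<n$, $m=n$ via the recursion \eqref{eq:A-recursion}, followed by induction on the total length of $v$ and $w$. One caveat worth recording: in the case $w=\xi_{\ell}w'$ the region $m>n$ produces $\xi_{k}\,\rho(v',\xi_{\ell}w')$, so the three contributions match the recursion $\rho(\xi_{k}v,\xi_{\ell}w)=\xi_{k}\,\rho(v,\xi_{\ell}w)+\xi_{\ell}\,\rho(\xi_{k}v,w)+\xi_{k+\ell}\,\rho(v,w)$; the first summand in the paper's displayed definition, printed as $\xi_{k}\,\rho(v,z_{\ell}w)$, is evidently a misprint for $\xi_{k}\,\rho(v,\xi_{\ell}w)$ (as the later use of $\rho$ in the proof of Proposition \ref{prop:main-2} confirms), and your computation is consistent with the intended rule.
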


\begin{proof}
It is enough to consider the case where $v$ and $w$ are words. 
If $v=1$ or $w=1$, it is trivial. 
{}From the definition of $A(M)$, it holds that 
\begin{align}
A_{\xi_{k}w}(M)=\sum_{M>m>0}\frac{q^{km}}{[m]^{k}}A_{w}(m), \quad 
A_{z_{\ell}w}(M)=\sum_{M>n>0}\frac{q^{(\ell-1)n}}{[n]^{\ell}}A_{w}(n). 
\label{eq:A-recursion}
\end{align}
Hence we find 
\begin{align*}
A_{\xi_{k}v}(M)A_{z_{\ell}w}(M)&=\left(
\sum_{M>m>n>0}+\sum_{M>n>m>0}+\sum_{M>m=n>0}\right)
\frac{q^{km}}{[m]^{k}}\frac{q^{(\ell-1)n}}{[n]^{\ell}}A_{v}(m)A_{w}(n) \\ 
&=\sum_{M>m>0}\frac{q^{km}}{[m]^{k}}A_{v}(m)A_{z_{\ell}w}(m)+
\sum_{M>n>0}\frac{q^{(\ell-1)n}}{[n]^{\ell}}A_{\xi_{k}v}(n)A_{w}(n) \\ 
&+\sum_{M>m>0}\frac{q^{(k+\ell-1)m}}{[m]^{k+\ell}}A_{v}(m)A_{w}(m)   
\end{align*}
and a similar formula for $A_{\xi_{k}v}(M)A_{\xi_{\ell}w}(M)$. 
Now the proposition follows from the induction on 
the sum of length of $v$ and $w$. 
\end{proof}

For $k \ge 1$, we define a $\mathbb{Z}$-linear map 
$\xi_{k}\circ \cdot \, : \mathfrak{d}_{\xi} \to \mathfrak{d}_{\xi}$ 
inductively by 
$\xi_{k}\circ 1=0$ and $\xi_{k}\circ (\xi_{\ell}v)=\xi_{k+\ell}v$ 
for $v \in \mathfrak{d}_{\xi}$.  
Now consider the $\mathbb{Z}$-linear map $d:\mathfrak{d}_{\xi} \to \mathfrak{d}_{\xi}$ 
defined by $d(1)=1$ and 
$d(\xi_{k}v)=\xi_{k}d(v)+\xi_{k}\circ d(v) \, (v \in \mathfrak{d}_{\xi})$. 

\begin{prop}\label{prop:star-to-A}
For any $v \in \mathfrak{d}_{\xi}$ and $M\ge 1$, 
it holds that $A_{v}^{\star}(M)=A_{d(v)}(M)$.  
\end{prop}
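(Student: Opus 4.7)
The plan is to reduce to words by $\mathbb{Z}$-linearity and induct on the length of $v$. The base case $v=1$ is immediate since $A_1^\star(M)=1=A_1(M)=A_{d(1)}(M)$. For the inductive step I write $v=\xi_k w$ with $w\in\mathfrak{d}_\xi$, and first derive an $A^\star$-analogue of the first recursion in \eqref{eq:A-recursion}. Starting from
\[
A_{\xi_k w}^\star(M)=\sum_{0<m<M}\frac{q^{km}}{[m]^k}\,A_w^\star(m+1),
\]
I split the inner summation according to whether its largest index is strictly less than $m$ or equal to $m$. When $w=\xi_{k_1}u$, the equal case causes the two leading factors $J_{\xi_k}(m)J_{\xi_{k_1}}(m)$ to collapse to $J_{\xi_{k+k_1}}(m)$, and its total contribution is exactly $A_{\xi_k\circ w}^\star(M)$ (the case $w=1$ fits consistently via $\xi_k\circ 1=0$). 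Thus
\[
A_{\xi_k w}^\star(M)=\sum_{0<m<M}\frac{q^{km}}{[m]^k}\,A_w^\star(m)+A_{\xi_k\circ w}^\star(M).
\]

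Next I invoke the induction hypothesis on both $w$ and $\xi_k\circ w$; crucially, $\xi_k\circ w$ has the same length as $w$, strictly less than that of $v$, so the hypothesis applies to it. This gives $A_w^\star(m)=A_{d(w)}(m)$ and $A_{\xi_k\circ w}^\star(M)=A_{d(\xi_k\circ w)}(M)$, while the first identity of \eqref{eq:A-recursion} rewrites the first sum above as $A_{\xi_k d(w)}(M)$. The argument is then completed by the algebraic identity $d(\xi_k\circ w)=\xi_k\circ d(w)$, which is a direct check on words: for $w=1$ both sides vanish, and for $w=\xi_\ell u$ the recursive definition of $d$ together with the elementary composition rule $(\xi_k\circ)(\xi_\ell\circ)=\xi_{k+\ell}\circ$ gives
\[
d(\xi_k\circ\xi_\ell u)=d(\xi_{k+\ell}u)=\xi_{k+\ell}\,d(u)+\xi_{k+\ell}\circ d(u)=\xi_k\circ\bigl(\xi_\ell\,d(u)+\xi_\ell\circ d(u)\bigr)=\xi_k\circ d(\xi_\ell u).
\]
Substituting, the right-hand side of the target identity becomes $A_{\xi_k d(w)+\xi_k\circ d(w)}(M)=A_{d(\xi_k w)}(M)$, closing the induction.

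The main subtlety is the appeal to the induction hypothesis on $\xi_k\circ w$: this word has the same length as $w$ but strictly larger weight, so the induction must be organized by length rather than weight, and the commutation $d(\xi_k\circ\cdot)=\xi_k\circ d(\cdot)$ becomes an essential algebraic ingredient beyond the combinatorial splitting of weakly-decreasing chains used to derive the recursion above.
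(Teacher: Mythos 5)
Your proof is correct, and it runs the same induction on length as the paper, but with the decomposition arranged differently. The paper applies the induction hypothesis first, writing $A^\star_{\xi_k v}(M)=\sum_{M>m>0}\frac{q^{km}}{[m]^k}A^\star_v(m+1)=\sum_{M>m>0}\frac{q^{km}}{[m]^k}A_{d(v)}(m+1)$, and only then splits the strict sum $A_{d(v)}(m+1)$ according to whether its first index equals $m$ or is smaller; this gives $A_{\xi_k d(v)+\xi_k\circ d(v)}(M)$ directly, which is $A_{d(\xi_k v)}(M)$ by the very definition of $d$, so no further algebraic input is needed. You instead split the weakly decreasing sum first, deriving the $\star$-recursion $A^\star_{\xi_k w}(M)=\sum_{M>m>0}\frac{q^{km}}{[m]^k}A^\star_w(m)+A^\star_{\xi_k\circ w}(M)$, and this costs you two extra ingredients: an application of the induction hypothesis to $\xi_k\circ w$ (legitimate, as you observe, because $\xi_k\circ\cdot$ preserves length, so the induction must indeed be organized by length, which is also how the paper runs it) and the commutation $d(\xi_k\circ w)=\xi_k\circ d(w)$, which you verify correctly from the composition rule $(\xi_k\circ)(\xi_\ell\circ)=\xi_{k+\ell}\circ$. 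The paper's arrangement is slightly leaner because the merging of indices happens after $d$ has already been applied, so the defining recursion $d(\xi_k v)=\xi_k d(v)+\xi_k\circ d(v)$ closes the induction at once; your arrangement pays for one more (easy) lemma but isolates two reusable facts, namely a self-contained recursion for $A^\star$ and the statement that $d$ commutes with each $\xi_k\circ\cdot$.
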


\begin{proof}
{}From the definition of $A(M)$ and $A^{\star}(M)$ we have 
\begin{align*}
A_{\xi_{k}v}^{\star}(M)=\sum_{M>m>0}\frac{q^{km}}{[m]^{k}}A^{\star}_{v}(m+1), 
\quad 
\sum_{M>m>0}\frac{q^{km}}{[m]^{k}}A_{v}(m+1)=A_{\xi_{k}v+\xi_{k}\circ v}(M). 
\end{align*}
To show the second formula, divide the sum $A_{v}(m+1)$ into the two parts 
with $m_{1}=m$ and $m_{1}<m$. 
Combining the two formulas above, we obtain the proposition 
by induction on length of $v$. 
\end{proof}

\subsection{Algebraic formulation of the main theorem}\label{subsec:proof1}

To calculate the right hand side of \eqref{eq:propK} multiplied by 
the harmonic sum \eqref{eq:harmonic-tail}, 
we need the following formula: 

\begin{lem}\label{lem:f-through}
For $n_{1}>\cdots >n_{s}>n_{s+1}>0$, set  
\begin{align}
p(n_{1}, \ldots , n_{s}; n_{s+1}):=\frac{q^{n_{1}-n_{s+1}}}{[n_{1}-n_{s+1}]}
\prod_{j=2}^{s}\frac{1}{[n_{j}-n_{s+1}]}.  
\label{eq:def-p}
\end{align}
Let $s\ge 1$, $v=z_{1}$ or $\xi_{1}$, and $N$ and $M$ be positive integers such that $N>M$. 
Then it holds that  
\begin{align*} 
& 
\sum_{N>n_{1}>\cdots >n_{s+1}>M}p(n_{1}, \ldots , n_{s}; n_{s+1})J_{v}(n_{s+1}) \\ 
&=\sum_{N>k_{1}>\cdots >k_{s+1}>M}
J_{v}(k_{1})p(k_{2}, \ldots , k_{s}, k_{s+1}; M) \\ 
&+\sum_{i=1}^{s}\sum_{N>k_{1}>\cdots >k_{s+1}>M}\frac{q^{k_{1}}}{[k_{1}]}
\left( \prod_{j=2}^{i+1}\frac{1}{[k_{j}]} \right)
p(k_{i+2}, \ldots , k_{s+1}; M), 
\end{align*}
where $p(\emptyset; M)=1$.  
\end{lem}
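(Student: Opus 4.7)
The plan is to prove Lemma \ref{lem:f-through} by induction on $s$, using iterated applications of the $q$-partial fraction identity
\[
\frac{1}{[a][b]} = \frac{1}{[b][a+b]} + \frac{q^a}{[a][a+b]},
\]
which is equivalent to the elementary relation $[a+b]=[a]+q^a[b]$.

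For the base case $s=1$, I would apply this identity with $a=n_1-n_2$, $b=n_2$ (so $a+b=n_1$), giving
\[
\frac{1}{[n_1-n_2][n_2]} = \frac{1}{[n_2][n_1]} + \frac{q^{n_1-n_2}}{[n_1-n_2][n_1]}.
\]
Multiplying by the $q$-factor present in the LHS summand---namely $q^{n_1-n_2}$ when $v=z_1$ and $q^{n_1}$ when $v=\xi_1$---and summing over $N>n_1>n_2>M$, one of the two summands matches the correction term $\sum\frac{q^{k_1}}{[k_1][k_2]}$ directly (in both cases the $q$-exponents simplify to $q^{k_1}$), while the other matches the main term $\sum J_v(k_1)\cdot\frac{q^{k_2-M}}{[k_2-M]}$ after the change of variable $j:=n_1-n_2\leftrightarrow\ell:=k_2-M$, both ranging over $\{1,\dots,k_1-M-1\}$ for fixed $k_1=n_1$.

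For the inductive step, I would iterate the same partial-fraction identity to peel off the correction terms one at a time. At each step, the identity is applied to a pair of factors of the form $\bigl(\frac{1}{[n_j-n_{s+1}]},\frac{1}{[n_{s+1}]}\bigr)$, absorbed into the surrounding $q$-factor; this produces one summand that contributes the next correction term in the RHS and one summand that continues to carry a $p$-like structure. After $s$ iterations all correction terms $i=1,\dots,s$ have been generated, and the remaining piece, together with the change-of-variable argument from the base case, yields the main ($i=0$) term. When convenient, the inductive hypothesis can be invoked once the residual sum takes the form of the LHS of the lemma for smaller $s$.

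The main obstacle is the combinatorial bookkeeping required to align the iterated partial-fraction expansion term-by-term with the specific RHS structure
\[
\sum_{i=0}^{s}\frac{q^{k_1}}{[k_1]}\prod_{j=2}^{i+1}\frac{1}{[k_j]}\,p(k_{i+2},\dots,k_{s+1};M),
\]
where the $i=0$ term uses $J_v(k_1)$ in place of $q^{k_1}/[k_1]$ and $p(\emptyset;M)=1$. A secondary subtlety is the uniform handling of the two cases $v\in\{z_1,\xi_1\}$, which differ only by a single factor of $q^{n_{s+1}}$ in the LHS integrand; parameterizing $J_v(n)=q^{\epsilon n}/[n]$ with $\epsilon\in\{0,1\}$ allows one to carry both cases through the computation simultaneously, tracking how $\epsilon$ interacts with each partial-fraction step.
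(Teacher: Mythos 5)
Your overall strategy (the $q$-partial fraction coming from $[a+b]=[a]+q^{a}[b]$, followed by a change of summation variables) is the same as the paper's, but the base case as you have set it up fails for $v=z_{1}$, and this is not a cosmetic issue. For $v=z_{1}$ the summand is $\frac{q^{n_{1}-n_{2}}}{[n_{1}-n_{2}][n_{2}]}$, and multiplying your identity $\frac{1}{[n_{1}-n_{2}][n_{2}]}=\frac{1}{[n_{2}][n_{1}]}+\frac{q^{n_{1}-n_{2}}}{[n_{1}-n_{2}][n_{1}]}$ by the available factor $q^{n_{1}-n_{2}}$ gives
\begin{align*}
\frac{q^{n_{1}-n_{2}}}{[n_{1}-n_{2}][n_{2}]}
=\frac{q^{n_{1}-n_{2}}}{[n_{1}][n_{2}]}+\frac{q^{2(n_{1}-n_{2})}}{[n_{1}][n_{1}-n_{2}]},
\end{align*}
in which neither piece carries the exponent $q^{n_{1}}$; your claim that ``in both cases the $q$-exponents simplify to $q^{k_{1}}$'' is false when $v=z_{1}$, and neither piece equals a term of the asserted right-hand side. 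Concretely, with $s=1$, $N=4$, $M=1$ the only lattice point is $(n_{1},n_{2})=(3,2)$: your two pieces are $q/([3][2])$ and $q^{2}/[3]$, while the lemma's two terms are $q/[3]$ and $q^{3}/([3][2])$; the totals agree but the term-by-term matching on which your argument rests does not hold. What is needed for $v=z_{1}$ is the other grouping of the same elementary relation, namely $\frac{q^{a}}{[a][b]}=\frac{q^{a}}{[a+b][a]}+\frac{q^{a+b}}{[a+b][b]}$ (equivalently $[a+b]=[b]+q^{b}[a]$), applied with $a=n_{1}-n_{2}$, $b=n_{2}$: this yields $\frac{q^{n_{1}-n_{2}}}{[n_{1}][n_{1}-n_{2}]}+\frac{q^{n_{1}}}{[n_{1}][n_{2}]}$, whose second piece is exactly the correction term and whose first becomes the main term under $k_{1}=n_{1}$, $k_{2}-M=n_{1}-n_{2}$. (Your $\xi_{1}$ computation happens to work because the extra factor $q^{n_{2}}$ turns your split into this one; the $\epsilon$-parameterization does not rescue $z_{1}$.)

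The inductive step is also under-specified in a way that matters: after peeling off one correction the residual summand is $\frac{q^{n_{1}}}{[n_{1}]}\bigl(\prod_{j=2}^{s}\frac{1}{[n_{j}-n_{s+1}]}\bigr)\frac{1}{[n_{s+1}]}$, which is \emph{not} of the form $p(n_{2},\ldots,n_{s};n_{s+1})J_{v}(n_{s+1})$ (the leading factor lacks the numerator $q^{n_{2}-n_{s+1}}$), so the induction hypothesis as stated does not apply to it; the inner factors require the variant split $\frac{1}{[a][b]}=\frac{q^{a}}{[a+b][a]}+\frac{1}{[a+b][b]}$. The paper avoids induction on $s$ altogether: it proves the single termwise identity
\begin{align*}
p(n_{1},\ldots,n_{s};n_{s+1})J_{v}(n_{s+1})
=\sum_{i=0}^{s}\frac{q^{(1-\delta_{i,0})n_{1}}}{[n_{1}]}
\Bigl(\prod_{j=2}^{i+1}\frac{1}{[n_{j}]}\Bigr)p(n_{i+1},\ldots,n_{s};n_{s+1})
\end{align*}
by successively peeling the factors with the two splits above, and then, for each $i$, changes variables by $n_{t}=k_{t}$ $(t\le i+1)$, $n_{t}=k_{i+1}-k_{i+2}+k_{t+1}$ $(i+2\le t\le s)$, $n_{s+1}=k_{i+1}-k_{i+2}+M$. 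Your change-of-variables idea for the main term is the right one, but until the correct splits are used and the peeling (or a suitably generalized induction statement) is written out, the proof has a genuine gap precisely at the bookkeeping you flag as the main obstacle.
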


\begin{proof}
Here we prove the lemma in the case of $v=z_{1}$. 
The proof for $v=\xi_{1}$ is similar. 
Using 
\begin{align*}
& 
\frac{1}{[n_{1}-n_{s+1}][n_{s+1}]}=\frac{1}{[n_{1}-n_{s+1}]}+\frac{q^{n_{s+1}}}{[n_{s+1}]}, \\
& 
\frac{1}{[n_{j}-n_{s+1}][n_{s+1}]}=\frac{q^{n_{j}-n_{s+1}}}{[n_{j}-n_{s+1}]}+\frac{1}{[n_{s+1}]} 
\quad (j=2, \ldots , s),   
\end{align*}
we find that 
\begin{align*}
& 
p(n_{1}, \ldots , n_{s}; n_{s+1})J_{v}(n_{s+1}) \\ 
&=\sum_{i=0}^{s}\frac{q^{(1-\delta_{i,0})n_{1}}}{[n_{1}]}
\left( \prod_{j=2}^{i+1}\frac{1}{[n_{j}]} \right)
p(n_{i+1}, \ldots , n_{s}; n_{s+1}). 
\end{align*}
Now take the sum of the both hand sides over $N>n_{1}>\cdots >n_{s+1}>M$. 
In the right hand side, change the variables 
$n_{1}, \ldots , n_{s+1}$ to $k_{1}, \ldots , k_{s+1}$ by setting  
$n_{t}=k_{t} \, (1 \le t \le i+1), \, n_{t}=k_{i+1}-k_{i+2}+k_{t+1} \, (i+2 \le t \le s)$ 
and $n_{s+1}=k_{i+1}-k_{i+2}+M$. 
Then we get the desired formula. 
\end{proof}

Let $\mathfrak{d}_{1}$ be the $\mathbb{Z}$-subalgebra of $\mathfrak{d}$ 
generated by $z_{1}$ and $\xi_{1}$. 
Motivated by Lemma \ref{lem:f-through} we introduce the $\mathbb{Z}$-module homomorphism 
$\varphi_{s}\,:\, \mathfrak{d}_{1} \to \mathfrak{d}_{1} \, (s \in \mathbb{Z}_{\ge 0})$ 
defined in the following way. 
Determine $\varphi_{s}(w)$ for a word $w \in \mathfrak{d}_{1}$ inductively 
on $s$ and length of $w$ by 
$\varphi_{0}={\rm id}, \, \varphi_{s}(1)=\xi_{1}z_{1}^{s-1}\, (s \ge 1)$ and 
\begin{align*}
\varphi_{s}(z_{1}w)=z_{1}\varphi_{s}(w)+\xi_{1}\sum_{i=1}^{s}z_{1}^{i}\varphi_{s-i}(w), \qquad 
\varphi_{s}(\xi_{1}w)=\xi_{1}\sum_{i=0}^{s}z_{1}^{i}\varphi_{s-i}(w),  
\end{align*}
and extend it by $\mathbb{Z}$-linearity.

\begin{prop}\label{prop:contraction}
For $w \in \mathfrak{d}_{1}$ and any positive integers $s, s', \ell, \beta$ and $N$, we have 
\begin{align}
\sum_{N>M_{1}>M_{2}>0}f_{s'}(N, M_{1})f_{s}(M_{1}, M_{2})A_{w}(M_{2})
&=\sum_{N>M>0}f_{s'}(N, M)A_{\varphi_{s}(w)}(M), 
\label{eq:ffA} \\ 
\sum_{M_{1}>M_{2}>0}g_{\ell, \beta}(M_{1})f_{s}(M_{1}, M_{2})A_{w}(M_{2})
&=\sum_{M>0}g_{\ell,\beta}(M)A_{\varphi_{s}(w)}(M). 
\nonumber 
\end{align} 
\end{prop}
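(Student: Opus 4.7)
The plan is to reduce both equalities to the single identity
\begin{equation*}
(\ast)\qquad \sum_{M_1>M_2>0} f_s(M_1,M_2)\,A_w(M_2) = A_{\varphi_s(w)}(M_1), \qquad w\in\mathfrak{d}_1,\; s\ge 1,\; M_1\ge 1.
\end{equation*}
The first assertion of the proposition then follows by multiplying $(\ast)$ by $f_{s'}(N,M_1)$ and summing over $N>M_1>0$, and the second by multiplying by $g_{\ell,\beta}(M_1)$ and summing over $M_1>0$, since in both cases the outer factor is independent of $M_2$ and factors out cleanly.

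I would prove $(\ast)$ by induction on the length $r$ of $w\in\mathfrak{d}_1$. For the base case $r=0$ ($w=1$, so $A_w\equiv 1$), the identity reduces to $\sum_{M_1>M_2>0} f_s(M_1,M_2) = A_{\xi_1 z_1^{s-1}}(M_1)$, which is verified by the change of variable $\ell_j:=k_j-M_2$ inside the definition of $f_s(M_1,M_2)$: the sum becomes $\sum_{M_1>\ell_1>\cdots>\ell_s>0}\frac{q^{\ell_1}}{[\ell_1]}\prod_{j=2}^{s}\frac{1}{[\ell_j]}$, which equals $A_{\varphi_s(1)}(M_1)$ since $\varphi_s(1)=\xi_1 z_1^{s-1}$.

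For the inductive step, write $w=u_1 w'$ with $u_1\in\{z_1,\xi_1\}$ and expand $A_w(M_2)=\sum_{M_2>m>0}J_{u_1}(m)A_{w'}(m)$. Substituting the definition of $f_s$, the left-hand side of $(\ast)$ takes the form
\begin{equation*}
\sum_{M_1>n_2>\cdots>n_{s+1}>m>0} p(M_1,n_2,\ldots,n_s;n_{s+1})\,J_{u_1}(m)\,A_{w'}(m),
\end{equation*}
with $n_{s+1}:=M_2$. The key step is to apply Lemma \ref{lem:f-through} to rewrite the inner sum over $n_2,\ldots,n_{s+1}$, with $m$ treated as the external lower bound and $u_1$ as the lemma's $v$. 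The output of the lemma is one leading piece of the shape $J_{u_1}(k_1)\cdot p(k_2,\ldots,k_{s+1};m)$ plus $s$ further pieces of the shape $\frac{q^{k_1}}{[k_1]}\bigl(\prod_{j=2}^{i+1}\frac{1}{[k_j]}\bigr)p(k_{i+2},\ldots,k_{s+1};m)$ for $i=1,\ldots,s$ (with the convention $p(\emptyset;m)=1$ when $i=s$). Each such piece recombines with $A_{w'}(m)$ and the remaining summation over $m$ into a quantity to which the inductive hypothesis $(\ast)$ applies, with $s$ replaced by $s-i$ (and $\varphi_0=\mathrm{id}$ covering the terminal case $i=s$). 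After regrouping, the total reads
\begin{equation*}
A_{u_1\varphi_s(w')}(M_1)+\sum_{i=1}^{s-1}A_{\xi_1 z_1^i\varphi_{s-i}(w')}(M_1)+A_{\xi_1 z_1^s w'}(M_1),
\end{equation*}
which equals $A_{\varphi_s(u_1 w')}(M_1)$ by the recursions
\begin{equation*}
\varphi_s(z_1 w')=z_1\varphi_s(w')+\xi_1\sum_{i=1}^s z_1^i\varphi_{s-i}(w'),\qquad \varphi_s(\xi_1 w')=\xi_1\sum_{i=0}^s z_1^i\varphi_{s-i}(w'),
\end{equation*}
closing the induction.

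The main obstacle is invoking Lemma \ref{lem:f-through} so that it actually applies: the lemma's $J_v$ factor sits at the innermost summation index $n_{s+1}$, which also coincides with the base of the $p$-factor, whereas in our situation $J_{u_1}$ is attached to $m$, one level strictly below $n_{s+1}=M_2$. Treating $m$ as the external lower bound of the lemma and deferring the summation over $m$ until after the lemma has been invoked is the correct reading; aligning the $s+1$ pieces produced by the lemma, term-by-term, with the $s+1$ summands in the recursive definition of $\varphi_s(u_1 w')$ is the combinatorial heart of the argument.
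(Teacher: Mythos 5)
Your reduction of both identities to the single statement $(\ast)$, and your base case $r=0$, are fine and in the same spirit as the paper (which proves the first formula and notes the second is similar). The gap is in the inductive step: the expression you display,
\[
\sum_{M_1>n_2>\cdots>n_{s+1}>m>0} p(M_1,n_2,\ldots,n_s;n_{s+1})\,J_{u_1}(m)\,A_{w'}(m),
\]
is not of the form to which Lemma \ref{lem:f-through} applies, and your proposed reading (``treat $m$ as the external lower bound and defer the summation over $m$'') does not make it so. In the lemma the factor $J_v$ is evaluated at the innermost summation variable $n_{s+1}$, which is also the base point of the factor $p(n_1,\ldots,n_s;n_{s+1})$, and the top entry $n_1$ of $p$ is itself a summation variable strictly below $N$; its proof rests on partial-fraction identities for $1/([n_j-n_{s+1}][n_{s+1}])$, which require the two factors to share the variable $n_{s+1}$. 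In your expression $J_{u_1}$ sits at $m$, strictly below the base $n_{s+1}=M_2$ of $p$, the top entry of $p$ is the fixed bound $M_1$ rather than a summed variable, and there are only $s$ summed variables between $M_1$ and $m$, whereas the lemma with parameter $s$ (the one whose output you quote, with variables $k_1,\ldots,k_{s+1}$) needs $s+1$. So the lemma's left-hand side never actually appears, and the pieces you list are not yet obtained.

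What is missing is the change of summation variables exploiting that $p$ in \eqref{eq:def-p} (hence $f_s$) depends only on differences from its base point: setting $k_i=M_1-n_1+n_{i+1}$ and $M_2=M_1-n_1+m_1$, i.e.\ translating the base of $p$ from $M_2$ down to the top variable $m_1$ of $A_w$, rewrites the left side of $(\ast)$ as $\sum_{M_1>n_1>\cdots>n_s>m_1>\cdots>m_r>0} p(n_1,\ldots,n_s;m_1)\prod_i J_{u_i}(m_i)$, in which $J_{u_1}$ now sits at the base of $p$ and the top entry $n_1$ is summed below $M_1$. Only then does Lemma \ref{lem:f-through} (applied for each fixed $m_2$, the remaining letters being spectators) produce exactly the $s+1$ pieces you describe, and the same translation, in reverse, is what lets you recognize each piece as an instance of the induction hypothesis (or, as in the paper, one runs the induction directly on the translated form). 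This is precisely the step the paper performs before invoking the lemma --- it is the same trick you already use in your base case --- and with it inserted your outline becomes the paper's proof; without it the inductive step does not go through.
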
 

\begin{proof}
Here we prove the first formula \eqref{eq:ffA}. 
The proof for the second is similar. 
It suffices to consider the case where 
$w=u_{1}\cdots u_{r} \, (r \ge 1, u_{i}\in S)$ is a word. 
The left hand side of \eqref{eq:ffA} is equal to 
\begin{align*}
\sum f_{s'}(N, M_{1})\,
p(M_{1}, k_{1}, \ldots , k_{s-1}; M_{2})
\prod_{i=1}^{r}J_{u_i}(m_{i}), 
\end{align*}
where $p$ is defined by \eqref{eq:def-p} and 
the sum is over $M_{1}, k_{i} \, (1\le i \le s-1), M_{2}, m_{i} \, (1 \le i \le r)$ 
with the condition 
$N>M_{1}>k_{1}>\cdots >k_{s-1}>M_{2}>m_{1}>\cdots >m_{r}>0$.  
Changing the variables $(k_{1}, \ldots , k_{s-1}, M_{2})$ to 
$(n_{1}, \ldots , n_{s})$ by 
$k_{i}=M_{1}-n_{1}+n_{i+1} \, (1 \le i \le s-1)$ and $M_{2}=M_{1}-n_{1}+m_{1}$, 
we obtain 
\begin{align*}
\sum f_{s'}(N, M_{1})p(n_{1}, \ldots , n_{s}; m_{1})
\prod_{i=1}^{r}J_{u_{i}}(m_{i}), 
\end{align*}
where the sum is over  
$N>M_{1}>n_{1}>\cdots >n_{s}>m_{1}>\cdots >m_{r}>0$.  
{}From Lemma \ref{lem:f-through} and the definition of $\varphi_{s}$,   
we see by induction on $r$ that it is equal to the right hand side of \eqref{eq:ffA}.  
\end{proof}

We define the $\mathbb{Z}$-linear maps  
$\Phi_{\ell}: \mathfrak{d}_{1} \to \mathfrak{d}_{1} \, (\ell \ge 0)$ by 
$\Phi_{0}:={\rm id}$ and 
\begin{align*}
\Phi_{\ell}:=\sum_{r=1}^{\ell}(-1)^{r}\!\!\!\sum_{c \in I(r, \ell)}
\varphi_{c_{1}} \cdots \varphi_{c_{r}},     
\end{align*}
and $Z_{s} : \mathfrak{d}_{1}\to \mathfrak{d} \, (s\ge 0)$ by 
\begin{align*}
Z_{s}(w):=\sum_{\ell=0}^{s}\rho(d(\xi_{1}^{s-\ell}), \Phi_{\ell}(w)).  
\end{align*}

\begin{prop}\label{prop:main-0}
For any integers $a\ge 0, b\ge 1$ and $n\ge b+1$, we have 
\begin{align}
\sum_{\alpha \in I_{0}(b, n)}\zeta_{q}(\alpha_{1}, \ldots , \alpha_{b}, 1^{a})=
\sum_{s=0}^{b-1}\sum_{M>0}\frac{q^{(n-s-1)M}}{[M]^{n-s}}
A_{Z_{s}(z_{1}^{a})}(M).  
\label{eq:prop-main-0}
\end{align}  
\end{prop}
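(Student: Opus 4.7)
The plan is to recognise the left-hand side of \eqref{eq:prop-main-0} as the pairing
\begin{equation*}
\sum_{M>0}K_{b,n}(M)\,A_{z_{1}^{a}}(M),
\end{equation*}
obtained by splitting the admissible sum at $m_{b}=M$ and interpreting the tail $M>m_{b+1}>\cdots>m_{b+a}>0$ as $A_{z_{1}^{a}}(M)$. Substituting the expansion of $K_{b,n}(M)$ supplied by Proposition~\ref{prop:K}, the task reduces to processing, for each $\ell$ and each composition $c\in I(r,\ell)$, a chain of the form $g_{b-\ell,n-b+1}(N_{1})\, f_{c_{1}}(N_{1},N_{2})\cdots f_{c_{r}}(N_{r},M)\, A_{z_{1}^{a}}(M)$, summed over $N_{1}>\cdots>N_{r}>M>0$.

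The next step is to collapse this chain from right to left using Proposition~\ref{prop:contraction}. Its first identity absorbs $f_{c_{r-1}}(N_{r-1},N_{r})f_{c_{r}}(N_{r},M)A_{z_{1}^{a}}(M)$ into $f_{c_{r-1}}(N_{r-1},M)A_{\varphi_{c_{r}}(z_{1}^{a})}(M)$; iterating $r-1$ times produces $f_{c_{1}}(N_{1},M)A_{\varphi_{c_{2}}\cdots\varphi_{c_{r}}(z_{1}^{a})}(M)$. A single application of the second identity then merges $f_{c_{1}}$ into $g_{b-\ell,n-b+1}$, yielding $g_{b-\ell,n-b+1}(M)\,A_{\varphi_{c_{1}}\varphi_{c_{2}}\cdots\varphi_{c_{r}}(z_{1}^{a})}(M)$. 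Summation over $c\in I(r,\ell)$ and $r$ with sign $(-1)^{r}$ assembles exactly $\Phi_{\ell}$, and combined with the $\ell=0$ contribution this gives
\begin{equation*}
\sum_{\alpha\in I_{0}(b,n)}\zeta_{q}(\alpha_{1},\ldots,\alpha_{b},1^{a})=\sum_{s=0}^{b-1}\sum_{M>0}g_{b-s,n-b+1}(M)\,A_{\Phi_{s}(z_{1}^{a})}(M).
\end{equation*}

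It remains to match this to \eqref{eq:prop-main-0}. Factoring out the $m_{1}=M$ term in the definition of $g$ gives $g_{\ell,\beta}(M)=\frac{q^{(\beta-1)M}}{[M]^{\beta}}A^{\star}_{\xi_{1}^{\ell-1}}(M+1)$, and I would establish by induction on $k$ the finite unfolding
\begin{equation*}
A^{\star}_{\xi_{1}^{k}}(M+1)=\sum_{j=0}^{k}\frac{q^{jM}}{[M]^{j}}\,A^{\star}_{\xi_{1}^{k-j}}(M),
\end{equation*}
by separating the $m_{1}=M$ case in the definition of $A^{\star}$. Substituting, reindexing the resulting double sum by $t=s+k$ makes the prefactor $\frac{q^{(n-t-1)M}}{[M]^{n-t}}$ independent of the inner index $s$. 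Proposition~\ref{prop:star-to-A} then converts each $A^{\star}_{\xi_{1}^{t-s}}(M)$ into $A_{d(\xi_{1}^{t-s})}(M)$, Proposition~\ref{prop:A-prod} bundles the product $A_{d(\xi_{1}^{t-s})}(M)A_{\Phi_{s}(z_{1}^{a})}(M)$ as $A_{\rho(d(\xi_{1}^{t-s}),\Phi_{s}(z_{1}^{a}))}(M)$, and the definition of $Z_{t}$ recognises the inner sum as $A_{Z_{t}(z_{1}^{a})}(M)$, completing the identification.

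The main obstacle is the second step: the iterated contraction must be organised so that the order in which the $\varphi_{c_{j}}$'s compose matches the definition of $\Phi_{\ell}$ exactly, which forces a right-to-left telescoping and careful bookkeeping of which $f$ is being paired with which at each stage. Once this is in place, the third step is a routine reindexing combined with the algebraic identities of Section~\ref{subsec:algebra}, the one non-mechanical input being the finite unfolding of $A^{\star}_{\xi_{1}^{k}}(M+1)$ into values of $A^{\star}_{\xi_{1}^{j}}(M)$.
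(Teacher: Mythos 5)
Your proposal is correct and follows essentially the same route as the paper: pairing $K_{b,n}(M)$ with $A_{z_{1}^{a}}(M)$, expanding via Proposition \ref{prop:K}, collapsing the $f$-chains right-to-left with Proposition \ref{prop:contraction} to get $\sum_{s}\sum_{M}g_{b-s,n-b+1}(M)A_{\Phi_{s}(z_{1}^{a})}(M)$, and then converting $g$ through $A^{\star}$, Proposition \ref{prop:star-to-A}, Proposition \ref{prop:A-prod} and the definition of $Z_{s}$. The only difference is presentational: you derive the substitution for $g_{j,n}(M)$ by an explicit unfolding of $A^{\star}_{\xi_{1}^{k}}(M+1)$ into values at $M$, which the paper states in combined form without proof.
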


\begin{proof}
Using Proposition \ref{prop:contraction} repeatedly, we have 
\begin{align*}
& 
\sum_{N_{1}>N_{2}>\cdots >N_{r}>M>0}\!\!\!
g_{b, m}(N_{1})\, 
h_{r, \ell}(N_{1}, \ldots , N_{r}, M)A_{z_{1}^{a}}(M) \\ 
&=\sum_{c \in I(r, \ell)}\sum_{M>0}g_{b,m}(M)
A_{\varphi_{c_{1}} \cdots \varphi_{c_{r}}(z_{1}^{a})}(M),   
\end{align*}
where $h_{r,\ell}$ is defined by \eqref{eq:def-hrs}.  
Hence Proposition \ref{prop:K} implies that 
\begin{align*}
\sum_{\alpha \in I_{0}(b, m)}\zeta_{q}(\alpha_{1}, \ldots , \alpha_{b}, 1^{a})&=
\sum_{M>0}K_{b, m}(M)A_{z_{1}^{a}}(M) \\ 
&=\sum_{\ell=0}^{b-1}\sum_{M>0}
g_{b-\ell, m-b+1}(M)A_{\Phi_{\ell}(z_{1}^{a})}(M).
\end{align*} 
Substituting 
\begin{align*}
g_{j, n}(M)=\sum_{t=0}^{j-1}\frac{q^{(n+j-t-2)M}}{[M]^{n+j-t-1}}A^{\star}_{\xi_{1}^{t}}(M),  
\end{align*}
we get the desired formula from 
Proposition \ref{prop:A-prod} and Proposition \ref{prop:star-to-A}. 
\end{proof}

As we will see in the next subsection, 
the elements $Z_{s}(z_{1}^{a}) \, (s, a \ge 0)$ belong to 
the subalgebra of $\mathfrak{d}$ generated only by $\{z_{k}\}_{k=1}^{\infty}$ 
(see Proposition \ref{prop:main-2} and Proposition \ref{prop:main-3} below). 
Thus the right hand side of \eqref{eq:prop-main-0} will turn out to be a linear combination of 
$q$MZV's. 

\subsection{Proof of the main theorem}\label{subsec:proof2}

First we give a proof of Theorem \ref{thm:main} with $a=0$, 
that is, the sum formula for $q$MZV's. 
To this aim we prepare a recurrence relation of $d(\xi_{1}^{k}) \, (k \ge 0)$. 

\begin{lem}
Let $k\ge 1$. Then 
\begin{align*}
d(\xi_{1}^{k})=\sum_{r=1}^{k}
\sum_{c \in I(r, k)}\xi_{c_{1}} \cdots \xi_{c_{r}}.  
\end{align*} 
\end{lem}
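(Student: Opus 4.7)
I would prove this by induction on $k$, exploiting the defining recursion
\[
d(\xi_1^k) = \xi_1\, d(\xi_1^{k-1}) + \xi_1 \circ d(\xi_1^{k-1})
\]
and classifying compositions $c \in I(r,k)$ according to whether their first part equals $1$ or is at least $2$.

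For the base case $k=1$, the definitions give $d(\xi_1) = \xi_1 \cdot d(1) + \xi_1 \circ d(1) = \xi_1 + 0 = \xi_1$, which matches the right-hand side (only the composition $(1) \in I(1,1)$ contributes). For the inductive step, I would assume the formula for $\xi_1^{k-1}$ and analyze the two pieces of the recursion separately. The first piece $\xi_1\, d(\xi_1^{k-1})$ prepends a $\xi_1$ to every word $\xi_{c_1}\cdots\xi_{c_r}$ with $c \in I(r,k-1)$, which sets up a bijection between those words and words $\xi_{c'_1}\cdots\xi_{c'_{r'}}$ indexed by $c' \in I(r',k)$ with $c'_1 = 1$ (taking $c' = (1,c_1,\ldots,c_r)$, so $r' = r+1$ ranges from $2$ to $k$).

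The second piece $\xi_1 \circ d(\xi_1^{k-1})$ uses the definition $\xi_1 \circ (\xi_{c_1}v) = \xi_{1+c_1} v$, so it transforms $\xi_{c_1}\cdots\xi_{c_r}$ into $\xi_{c_1+1}\xi_{c_2}\cdots\xi_{c_r}$. Setting $c' = (c_1+1,c_2,\ldots,c_r)$ gives a bijection with $c' \in I(r',k)$ satisfying $c'_1 \ge 2$; the range $r' = r$ runs from $1$ to $k-1$ (it cannot reach $k$, since $c'_1 \ge 2$ forces the sum to exceed $k$). Combining the two pieces then partitions all compositions of $k$ (for every $r'$ between $1$ and $k$) according to whether $c'_1 = 1$ or $c'_1 \ge 2$, yielding exactly $\sum_{r=1}^k \sum_{c \in I(r,k)} \xi_{c_1}\cdots\xi_{c_r}$.

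There is no real obstacle here; the only thing to be careful about is the bookkeeping of the ranges of $r'$ in the two pieces and checking that the extreme cases $r'=1$ (the word $\xi_k$, which comes from the second piece) and $r'=k$ (the word $\xi_1^k$, which comes from the first piece) are each accounted for exactly once. Once the two index shifts are written out explicitly, the partition of $I(r,k)$ by the value of $c_1$ makes the identity immediate.
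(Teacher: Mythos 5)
Your proposal is correct and follows essentially the same route as the paper: induction on $k$ using the recursion $d(\xi_{1}^{k})=\xi_{1}d(\xi_{1}^{k-1})+\xi_{1}\circ d(\xi_{1}^{k-1})$, with the first term accounting for compositions of $k$ whose first part is $1$ and the second for those whose first part is at least $2$. The bookkeeping of the ranges of $r$ matches the paper's computation, so there is nothing to add.
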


\begin{proof}
We prove the lemma by induction on $k$.  
The case of $k=1$ is trivial. Let $k \ge 2$. 
{}From the definition of $d$ and the hypothesis of induction we see that 
\begin{align*}
d(\xi_{1}^{k})&=d(\xi_{1}\cdot \xi_{1}^{k-1})=\xi_{1}
\sum_{r=1}^{k-1}\sum_{c \in I(r, k-1)}\xi_{c_{1}} \cdots \xi_{c_{r}}+\xi_{1}\circ
\left(\sum_{r=1}^{k-1}\sum_{c \in I(r, k-1)}\xi_{c_{1}} \cdots \xi_{c_{r}}\right) \\ 
&=\sum_{r=2}^{k}\sum_{c \in I(r, k) \atop c_{1}=1}\xi_{c_{1}} \cdots \xi_{c_{r}}+
\sum_{r=1}^{k-1}\sum_{c \in I(r, k) \atop c_{1}\ge 2}\xi_{c_{1}} \cdots \xi_{c_{r}}=
\sum_{r=1}^{k}\sum_{c \in I(r, k)}\xi_{c_{1}} \cdots \xi_{c_{r}}. 
\end{align*}
This completes the proof. 
\end{proof}

\begin{cor}
For $k \ge 1$ it holds that 
\begin{align}
d(\xi_{1}^{k})=\sum_{a=1}^{k}\xi_{a}d(\xi_{1}^{k-a}).   
\label{eq:d-recursion}
\end{align}
\end{cor}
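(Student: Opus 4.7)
The plan is to derive the corollary directly from the preceding lemma by a combinatorial splitting of compositions according to their first part. By the lemma,
\begin{align*}
d(\xi_1^k) = \sum_{r=1}^k \sum_{c \in I(r,k)} \xi_{c_1} \cdots \xi_{c_r}.
\end{align*}
I would partition this sum by fixing $a := c_1 \in \{1, \ldots, k\}$ and factoring $\xi_a$ out to the left.

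For each fixed $a$ with $a < k$, the tail $(c_2, \ldots, c_r)$ ranges over all compositions of $k-a$ of positive length (with $r$ taking values in $\{2, \ldots, k-a+1\}$), so the inner sum equals $d(\xi_1^{k-a})$ by the lemma applied at weight $k-a$. When $a = k$, the only contribution comes from $r = 1$ and $c = (k)$, giving the single word $\xi_k$, which matches $\xi_k\, d(\xi_1^0) = \xi_k$ thanks to the convention $d(1) = 1$. Assembling the two cases produces the claimed identity
\begin{align*}
d(\xi_1^k) = \sum_{a=1}^k \xi_a\, d(\xi_1^{k-a}).
\end{align*}

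The argument is essentially a reindexing of the sum furnished by the lemma, so no genuine obstacle is anticipated. The only point requiring care is the boundary case $a = k$, where the tail of the composition is empty; this is absorbed uniformly by the convention $d(1) = 1$, so that the single expression $\xi_a\, d(\xi_1^{k-a})$ is valid for every $a \in \{1, \ldots, k\}$.
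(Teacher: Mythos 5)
Your proof is correct and is exactly the argument the paper leaves implicit: the corollary is obtained from the lemma by grouping the compositions of $k$ according to their first part $a=c_{1}$, with the boundary case $a=k$ handled by the convention $d(1)=1$.
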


The sum formula for $q$MZV's follows from the following proposition. 

\begin{prop}\label{prop:main-2}
$Z_{s}(1)=\delta_{s, 0} \, (s \ge 0)$.  
\end{prop}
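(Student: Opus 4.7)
The plan is to reduce the identity $Z_s(1) = \delta_{s,0}$ to two algebraic sub-claims and verify each by induction. The first sub-claim is the closed-form evaluation
\[
\Phi_\ell(1) = (-\xi_1)^\ell \quad (\ell \geq 0).
\]
Extracting the outermost $\varphi_s$ factor from the definition of $\Phi_\ell$ yields the operator recursion $\Phi_\ell = -\sum_{s=1}^\ell \varphi_s\,\Phi_{\ell-s}$ for $\ell \geq 1$, so after invoking the inductive hypothesis at the argument $1$ the claim reduces to the auxiliary identity $T_\ell := \sum_{s=0}^\ell (-1)^s \varphi_s(\xi_1^{\ell-s}) = 0$ for every $\ell \geq 1$. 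This I would handle by a second induction on $\ell$: apply $\varphi_s(\xi_1 w) = \xi_1 \sum_{i=0}^{s} z_1^i \varphi_{s-i}(w)$ to the terms with $s < \ell$ and $\varphi_\ell(1) = \xi_1 z_1^{\ell-1}$ to the boundary term, then swap the order of summation to obtain
\[
T_\ell = \xi_1 \Bigl[\sum_{i=0}^{\ell-1}(-1)^i z_1^i T_{\ell-1-i} + (-1)^\ell z_1^{\ell-1}\Bigr];
\]
the inductive hypothesis kills the $T_k$ with $1 \le k \le \ell-1$, leaving only $(-1)^{\ell-1} z_1^{\ell-1} T_0 + (-1)^\ell z_1^{\ell-1} = 0$.

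With the first sub-claim in hand the proposition becomes $Y_s := \sum_{\ell=0}^s (-1)^\ell \rho(d(\xi_1^{s-\ell}), \xi_1^\ell) = \delta_{s,0}$, which I would prove for $s \geq 1$ by a direct algebraic manipulation. Separate the extreme terms $\ell = 0$ (which, after reassembling $\sum_{a=1}^s \xi_a d(\xi_1^{s-a})$ via \eqref{eq:d-recursion}, contributes $d(\xi_1^s)$) and $\ell = s$ (which contributes $(-1)^s \xi_1^s$). For $1 \leq \ell \leq s-1$, expand $d(\xi_1^{s-\ell})$ by \eqref{eq:d-recursion} and apply the three-term $\rho$-recursion
\[
\rho(\xi_a v, \xi_1^\ell) = \xi_a \rho(v, \xi_1^\ell) + \xi_1 \rho(\xi_a v, \xi_1^{\ell-1}) + \xi_{a+1} \rho(v, \xi_1^{\ell-1})
\]
to each summand $\rho(\xi_a d(\xi_1^{s-\ell-a}), \xi_1^\ell)$. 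The middle group reassembles $\sum_a \xi_a d(\xi_1^{s-\ell-a}) = d(\xi_1^{s-\ell})$ by bilinearity of $\rho$, while an index shift $b = a+1$ telescopes the $\xi_a$- and $\xi_{a+1}$-groups against each other across consecutive values of $\ell$, leaving nonzero contributions only at $\ell = 0$ and $\ell = s-1$. Collecting everything yields $Y_s = (-1)^s \xi_1^s + d(\xi_1^s) + (-1)^{s-1} \xi_1^s - d(\xi_1^s) = 0$.

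The main obstacle is the bookkeeping in the second step: the $\rho$-recursion produces three families of terms that must be reidentified through careful index shifts before the telescoping becomes visible, and one has to recognize the sum $\sum_a \xi_a d(\xi_1^{s-\ell-a})$ as $d(\xi_1^{s-\ell})$ at just the right moment. A slicker packaging is to work in $\mathfrak{d}[[x]]$ with the generating series $\mathcal{D}(x) = (1 - \sum_{a \geq 1}\xi_a x^a)^{-1}$ and $E(x) = (1 + x\xi_1)^{-1}$ and to encode the desired identity as $\rho(\mathcal{D}(x), E(x)) = 1$; a rigorous verification of this functional identity, however, ultimately reduces to the same telescoping computation.
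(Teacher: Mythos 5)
Your proposal is correct and follows the paper's strategy in its essential structure: the same reduction via $\Phi_{\ell}(1)=(-\xi_{1})^{\ell}$ to the identity $\sum_{\ell=0}^{s}(-1)^{\ell}\rho(d(\xi_{1}^{s-\ell}),\xi_{1}^{\ell})=\delta_{s,0}$, and the same basic manipulations (expanding $d$ by \eqref{eq:d-recursion} and applying the three-term recursion for $\rho$). Two points of difference are worth recording. First, where the paper simply asserts $\Phi_{\ell}(1)=(-\xi_{1})^{\ell}$ "by induction on $\ell$," you supply the actual verification, reducing it to $\sum_{s=0}^{\ell}(-1)^{s}\varphi_{s}(\xi_{1}^{\ell-s})=0$ and checking this by a second induction; I verified your recursion $T_{\ell}=\xi_{1}\bigl[\sum_{i=0}^{\ell-1}(-1)^{i}z_{1}^{i}T_{\ell-1-i}+(-1)^{\ell}z_{1}^{\ell-1}\bigr]$ and it is correct. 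Second, for the main identity the paper runs an induction on $s$, rewriting $T_{s}=\sum_{a=1}^{s-1}\xi_{a}T_{s-a}+\xi_{s}-\xi_{1}T_{s-1}-\sum_{a=1}^{s-1}\xi_{a+1}T_{s-a-1}$, whereas you expand once and telescope directly in $\ell$ with no induction; your bookkeeping checks out (the $\xi_{a}$- and $\xi_{a+1}$-families cancel after the shift $b=a+1$, the surviving $a=1$ terms cancel against the reassembled middle family except at the two ends, and the residue $\xi_{1}d(\xi_{1}^{s-1})+\sum_{b\ge 2}\xi_{b}d(\xi_{1}^{s-b})$ recombines to $d(\xi_{1}^{s})$ by \eqref{eq:d-recursion}), so your final tally $(-1)^{s}\xi_{1}^{s}+d(\xi_{1}^{s})+(-1)^{s-1}\xi_{1}^{s}-d(\xi_{1}^{s})=0$ is exactly right. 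This buys a marginally more self-contained argument at the cost of heavier index-shifting; the paper's induction hides the same cancellations inside the hypothesis $T_{a}=\delta_{a,0}$. One remark: the recursion you use, $\rho(\xi_{a}v,\xi_{1}^{\ell})=\xi_{a}\rho(v,\xi_{1}^{\ell})+\xi_{1}\rho(\xi_{a}v,\xi_{1}^{\ell-1})+\xi_{a+1}\rho(v,\xi_{1}^{\ell-1})$, differs from the displayed definition of $\rho$ in the paper (which has $z_{\ell}w$ in the first term), but it is the form forced by Proposition \ref{prop:A-prod} and the one the paper's own proof actually uses, and the statement is only true with this reading, so your usage is the correct one.
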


\begin{proof}
Using $\Phi_{\ell}=-\sum_{a=1}^{\ell}\varphi_{a}\Phi_{\ell-a} \, (\ell \ge 1)$, 
we find that $\Phi_{\ell}(1)=(-\xi_{1})^{\ell} \, (\ell\ge 0)$ 
by induction on $\ell$.  
Thus the proposition is reduced to the proof of 
\begin{align*}
\sum_{\ell=0}^{s}(-1)^{\ell}\rho(d(\xi_{1}^{s-\ell}), \xi_{1}^{\ell})=\delta_{s,0}.  
\end{align*}
Let us prove it by induction on $s$. 
Denote the left hand side above by $T_{s}$. 
It is trivial that $T_{0}=1$. 
Let $s \ge 1$. Divide $T_{s}$ into the three parts 
\begin{align*}
T_{s}=d(\xi_{1}^{s})+\sum_{\ell=1}^{s-1}(-1)^{\ell}\rho(d(\xi_{1}^{s-\ell}), \xi_{1}^{\ell})+
(-1)^{s}\xi_{1}^{s}. 
\end{align*}
Rewrite the second part by using 
\eqref{eq:d-recursion} and the definition of $\rho$ and $d$. 
Then we get 
\begin{align}
\sum_{a=1}^{s-1}\xi_{a} \sum_{\ell=1}^{s-a}(-1)^{\ell}\rho(d(\xi_{1}^{s-a-\ell}), \xi_{1}^{\ell})-
\sum_{\ell=0}^{s-2}(-1)^{\ell}\xi_{1}\rho(d(\xi_{1}^{s-1-\ell}), \xi_{1}^{\ell})-\sum_{a=1}^{s-1}
\xi_{a+1}I_{s-a-1}. 
\label{eq:decomp-Ts}
\end{align}
{}From $(-1)^{s}\xi_{1}^{s}=-(-1)^{s-1}\xi_{1}\rho(d(\xi_{1}^{0}), \xi_{1}^{s-1})$, 
which is the summand of the second term of \eqref{eq:decomp-Ts} with $\ell=s-1$, and 
\begin{align*}
d(\xi_{1}^{s})=\sum_{a=1}^{s-1}\xi_{a}\rho(d(\xi_{1}^{s-a}), \xi_{1}^{0})+\xi_{s}, 
\end{align*}
we obtain 
\begin{align*}
T_{s}=\sum_{a=1}^{s-1}\xi_{a}T_{s-a}+\xi_{s}-\xi_{1}T_{s-1}-\sum_{a=1}^{s-1}\xi_{a+1}T_{s-a-1}.   
\end{align*}
Therefore the induction hypothesis $T_{a}=\delta_{a, 0} \, (a<s)$ implies that $T_{s}=0$. 
\end{proof}

{}From Proposition \ref{prop:main-0} with $a=0$ and 
Proposition \ref{prop:main-2}, we see that 
\begin{align*}
\sum_{\alpha \in I_{0}(b, n)}\zeta_{q}(\alpha_{1}, \ldots , \alpha_{b})=
\sum_{M>0}\frac{q^{(n-1)M}}{[M]^{n}}A_{1}(M)=\zeta_{q}(n).  
\end{align*}
Thus we get Theorem \ref{thm:main} in the case of $a=0$. 
To complete the proof of Theorem \ref{thm:main}, 
we should calculate $Z_{s}(z_{1}^{a})$ for $a \ge 1$. 
For that purpose we prepare several lemmas. 

\begin{lem}
For $\ell \ge 0$ and $w \in \mathfrak{d}_{1}$, it holds that 
\begin{align}
\Phi_{\ell}(z_{1}w)=\sum_{j=0}^{\ell}(-\xi_{1})^{\ell-j}z_{1}\Phi_{j}(w).  
\label{eq:Phi-recursion}
\end{align} 
\end{lem}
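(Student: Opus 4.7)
The plan is to induct on $\ell$. The base case $\ell=0$ is immediate: both sides reduce to $z_1 w$.

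For the inductive step the cleanest route is first to establish the one-step recurrence
\[
\Phi_{\ell}(z_{1}w) \;=\; z_{1}\Phi_{\ell}(w) \;-\; \xi_{1}\Phi_{\ell-1}(z_{1}w), \qquad \ell\ge 1,
\]
and then iterate it down to $\Phi_{0}(z_{1}w)=z_{1}w$; this iteration trivially produces the telescoping sum $\sum_{j=0}^{\ell}(-\xi_{1})^{\ell-j}z_{1}\Phi_{j}(w)$ claimed in the lemma.

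To prove the one-step recurrence I would start from the identity $\Phi_{\ell}=-\sum_{a=1}^{\ell}\varphi_{a}\Phi_{\ell-a}$ (the same identity used at the opening of the proof of Proposition \ref{prop:main-2}) applied to $z_{1}w$, then for $1\le a\le \ell-1$ substitute the inductive hypothesis in its one-step form for $\Phi_{\ell-a}(z_{1}w)$, keeping the $a=\ell$ term aside because $\Phi_{0}=\mathrm{id}$. Next I would expand each term using the defining rules
\[
\varphi_{a}(z_{1}u)=z_{1}\varphi_{a}(u)+\xi_{1}\sum_{i=1}^{a}z_{1}^{i}\varphi_{a-i}(u), \qquad
\varphi_{a}(\xi_{1}u)=\xi_{1}\sum_{i=0}^{a}z_{1}^{i}\varphi_{a-i}(u).
\]
The resulting expression splits into a leading $z_{1}$-piece, which reassembles into $z_{1}\Phi_{\ell}(w)$ directly by the definition of $\Phi_{\ell}$, and a collection of $\xi_{1}$-pieces. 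Reorganizing the latter by the substitution $b=a-i$ reduces each inner summation to the shape $\sum_{b=0}^{n}\varphi_{b}\Phi_{n-b}$, which, with the convention $\varphi_{0}=\mathrm{id}$, is simply the defining recursion of $\Phi_{n}$ and hence equals $\delta_{n,0}\,\mathrm{id}$. Most contributions are killed on the spot; two stray terms $\pm\xi_{1}z_{1}^{\ell}w$ cancel, and what remains assembles into $-\xi_{1}\Phi_{\ell-1}(z_{1}w)$.

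The only genuine obstacle is combinatorial bookkeeping: one must carefully track the index ranges left after the change of variables $(a,i)\mapsto(b,i)$ and handle the boundary contributions $i=0$ and $b=0$ (where the collapsing identity is triggered) separately, as well as the small-$\ell$ edge case $\ell=1$ in which the inductive substitution step is vacuous. No algebraic input beyond the defining recursions of $\varphi_{a}$ and $\Phi_{\ell}$ is needed.
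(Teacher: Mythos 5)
Your proposal is correct, and I checked that the bookkeeping in your inductive step does close up: the $z_{1}$-pieces of $-\sum_{a=1}^{\ell}\varphi_{a}\Phi_{\ell-a}(z_{1}w)$ reassemble into $z_{1}\Phi_{\ell}(w)$; after the change of variables $b=a-i$ the truncated sums collapse via $\sum_{b=0}^{n}\varphi_{b}\Phi_{n-b}=\delta_{n,0}\,\mathrm{id}$ (the missing top term $b=n$ producing $+\xi_{1}\sum_{i=1}^{\ell-1}z_{1}^{i}\varphi_{\ell-i}(w)$, which cancels the $a=\ell$ contribution up to $-\xi_{1}z_{1}^{\ell}w$), and the $i=0$ and $i=\ell-1$ boundary slices of the $\xi_{1}$-piece yield exactly $-\xi_{1}\Phi_{\ell-1}(z_{1}w)+\xi_{1}z_{1}^{\ell}w$, so the stray terms cancel as you predicted. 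Your route differs from the paper's in a useful way: the paper does not isolate the one-step recurrence
\begin{align*}
\Phi_{\ell}(z_{1}w)=z_{1}\Phi_{\ell}(w)-\xi_{1}\Phi_{\ell-1}(z_{1}w),
\end{align*}
but instead substitutes the induction hypothesis in its full-sum form, which forces it to evaluate $\varphi_{s}$ on words with arbitrarily many leading $\xi_{1}$'s; to do so it introduces the auxiliary elements $\eta_{a,n}=\sum_{c\in I(a,n)}\xi_{1}z_{1}^{c_{1}}\cdots\xi_{1}z_{1}^{c_{a}}$ and the (unproved, reader-supplied) expansion $\varphi_{s}(\xi_{1}^{a}z_{1}w)=\sum_{t=0}^{s}(\eta_{a,s-t}+\eta_{a+1,s-t-1})z_{1}\varphi_{t}(w)$, after which the sums over $a$ telescope into deltas. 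Your reduction to the first-order recurrence only ever applies the defining rules of $\varphi_{a}$ across a single leading letter, so it avoids the $\eta_{a,n}$ machinery entirely and is arguably the more elementary and self-contained argument; the price is that the induction must be phrased as a strong induction (or, equivalently, you derive the one-step form at index $m$ from the full formula at $m$ and $m-1$), a point worth stating explicitly when you write it up, while the paper's version proves the stronger structural identity for $\varphi_{s}(\xi_{1}^{a}z_{1}w)$ along the way.
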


\begin{proof}
For non-negative integers $a$ and $n$, set $\eta_{0, n}=\delta_{n, 0}$ and 
\begin{align*}
\eta_{a, n}:=\sum_{c \in I(a, n)}
\xi_{1}z_{1}^{c_{1}} \cdots \xi_{1}z_{1}^{c_{a}} \quad (a \ge 1). 
\end{align*}
Then it holds that 
\begin{align*}
\varphi_{s}(\xi_{1}^{a}z_{1}w)=\sum_{t=0}^{s}(\eta_{a, s-t}+\eta_{a+1, s-t-1})z_{1}\varphi_{t}(w),  
\end{align*}
where $\eta_{a+1, -1}:=0$, for $a\ge 0, s \ge 0$ and $w \in \mathfrak{d}_{1}$. 
Using this formula we prove \eqref{eq:Phi-recursion} by induction on $\ell$. 
The case of $\ell=0$ is trivial. 
Let $\ell \ge 1$. 
The induction hypothesis and the relation $\Phi_{\ell}=-\sum_{a=1}^{\ell}\varphi_{a}\Phi_{\ell-a}$ 
imply that 
\begin{align*}
\Phi_{\ell}(z_{1}w)=\sum_{j=0}^{\ell-1}\sum_{a=1}^{\ell-j}\sum_{t=0}^{a}
(\eta_{\ell-a-j, a-t}+\eta_{\ell-a-j+1, a-t-1})z_{1}\varphi_{t}(\Phi_{j}(w)).  
\end{align*}
Divide the sum into the two parts with $t=0$ and $t \ge 1$, 
and take the sum with respect to $a$. 
Then we obtain 
\begin{align*}
\sum_{j=0}^{\ell-1}\left\{ 
(-\delta_{\ell-j,0}+(-1)^{\ell-j}\eta_{\ell-j,0})z_{1}\varphi_{0}(\Phi_{j}(w))-\sum_{t=1}^{\ell-j}
\delta_{\ell-j-t,0}z_{1}\varphi_{t}(\Phi_{j}(w))
\right\}.  
\end{align*}
Since $\eta_{\ell-j,0}=\xi_{1}^{\ell-j}$, $\varphi_{0}={\rm id}$ and 
$-\sum_{j=0}^{\ell-1}\varphi_{\ell-j}\Phi_{j}=\varphi_{\ell}$, 
it is equal to the right hand side of \eqref{eq:Phi-recursion}. 
\end{proof}

\begin{lem}\label{lem:rho-to-z}
For $k \ge 0$ and $w \in \mathfrak{d}_{1}$, it holds that 
\begin{align}
\sum_{\ell=0}^{k}\rho(d(\xi_{1}^{k-\ell}), \xi_{1}^{\ell}z_{1}w)=\sum_{\ell=0}^{k}
z_{\ell+1}\,\rho(d(\xi_{1}^{k-\ell}), w).  
\label{eq:rho-to-z}
\end{align} 
\end{lem}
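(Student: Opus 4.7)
My approach is induction on $k$. The base case $k=0$ is immediate, since $\rho(1,z_{1}w)=z_{1}w$ on the left and the only surviving term on the right is $z_{1}\rho(1,w)=z_{1}w$.

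For the inductive step I would separate the $\ell=0$ summand from those with $\ell\ge 1$. On the $\ell=0$ piece $\rho(d(\xi_{1}^{k}),z_{1}w)$, I apply \eqref{eq:d-recursion} to write $d(\xi_{1}^{k})=\sum_{a=1}^{k}\xi_{a}\,d(\xi_{1}^{k-a})$ and then use the first defining relation of $\rho$ on each $\rho(\xi_{a}\,d(\xi_{1}^{k-a}),z_{1}w)$; this produces three kinds of terms, with leading letter $\xi_{a}$, $z_{1}$, or $z_{a+1}$. For each $\ell\ge 1$ I would write $\xi_{1}^{\ell}z_{1}w=\xi_{1}(\xi_{1}^{\ell-1}z_{1}w)$ and apply the second defining relation of $\rho$, again splitting into three pieces. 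One then collects the results and shows that the $\xi$-leading terms cancel across the various values of $\ell$, while the $z$-leading terms reassemble into $\sum_{a=0}^{k}z_{a+1}\rho(d(\xi_{1}^{k-a}),w)$.

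The cancellation exploits \eqref{eq:d-recursion} in reverse: terms of the form $\sum_{a=1}^{k}\xi_{a}\rho(d(\xi_{1}^{k-a}),X)$ collapse to $\rho(d(\xi_{1}^{k}),X)-X$ for suitable $X$, which is precisely the shape required to apply the inductive hypothesis at stage $k-1$ to the shortened word appearing inside each $\rho$. The main obstacle is the combinatorial bookkeeping in this telescoping step. A cleaner route is to package both sides as coefficients of $t^{k}$ in a formal generating series: setting $D(t):=\sum_{k\ge 0}t^{k}d(\xi_{1}^{k})$, the recursion \eqref{eq:d-recursion} gives $D(t)=(1-\sum_{a\ge 1}t^{a}\xi_{a})^{-1}$ in $\mathfrak{d}[[t]]$, and the lemma becomes the single functional identity
\[
\rho\!\bigl(D(t),\,\textstyle\sum_{\ell\ge 0}t^{\ell}\xi_{1}^{\ell}z_{1}w\bigr)=\sum_{\ell\ge 0}t^{\ell}\,z_{\ell+1}\,\rho(D(t),w)
\]
in $\mathfrak{d}[[t]]$. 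Applying the $\rho$-recursion once to the left-hand side should yield a linear fixed-point equation for this series that can be solved directly to match the right-hand side, bypassing the need to repeat the combinatorial bookkeeping at every order.
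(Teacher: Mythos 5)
Your outline runs into a sign problem that it never confronts: the identity you set out to prove is the \emph{unsigned} sum, and in that form it is false. Already at $k=1$ one has
\[
\rho(d(\xi_1),z_1w)+\rho(1,\xi_1 z_1w)
=\bigl(\xi_1z_1w+z_1\rho(\xi_1,w)+z_2w\bigr)+\xi_1z_1w ,
\]
which differs from $z_1\rho(d(\xi_1),w)+z_2\rho(1,w)=z_1\rho(\xi_1,w)+z_2w$ by $2\,\xi_1z_1w$. The statement that is actually needed (and that the paper's own proof establishes) carries an alternating sign, $\sum_{\ell=0}^{k}(-1)^{\ell}\rho(d(\xi_1^{k-\ell}),\xi_1^{\ell}z_1w)$: this is exactly the combination that arises in the proof of Proposition \ref{prop:main-1}, where the factor $(-1)^{\ell-j}$ comes from $\Phi_{\ell}(z_1w)=\sum_j(-\xi_1)^{\ell-j}z_1\Phi_j(w)$, and the paper's proof of the lemma works throughout with this signed sum $L_k$. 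The crucial step of your plan --- that after one application of the $\rho$-recursion ``the $\xi$-leading terms cancel across the various values of $\ell$'' --- is precisely what fails without these signs, so as written the inductive step cannot close; the same defect carries over to your generating-series identity, whose second argument would have to be $\sum_{\ell\ge0}(-t)^{\ell}\xi_1^{\ell}z_1w$ rather than $\sum_{\ell\ge0}t^{\ell}\xi_1^{\ell}z_1w$.

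There is a second, independent gap: the asserted collapse $\sum_{a=1}^{k}\xi_a\rho(d(\xi_1^{k-a}),X)=\rho(d(\xi_1^{k}),X)-X$ is not valid, because $\rho$ is only $\mathbb{Z}$-bilinear and is defined by a recursion on leading letters; in general $\rho(\xi_a v,X)\neq\xi_a\rho(v,X)$ (the recursion produces three terms), so $\xi_a$ cannot be pulled out of the first slot and \eqref{eq:d-recursion} cannot be applied ``in reverse'' inside $\rho$ in this way. The paper's argument avoids any such step: it splits the signed sum $L_k$ into the pieces $\ell=0$, $1\le\ell\le k-1$ and $\ell=k$, applies \eqref{eq:d-recursion} to the first argument and one step of the $\rho$-recursion to peel off the leading letter, recognizes the resulting inner sums as complete copies of $L_j$ with $j<k$ together with $R_k$, and then the terms $\sum_a\xi_aL_{k-a}$ occur twice with opposite signs and cancel identically --- so apart from the trivial case $k=0$ no induction hypothesis is even invoked. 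If you restore the signs and replace your ``collapse'' step by this peel-and-regroup bookkeeping, your outline essentially becomes the paper's proof; as it stands, it has a genuine gap.
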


\begin{proof}
Denote the left hand side and the right hand side of \eqref{eq:rho-to-z} by $L_{k}$ and $R_{k}$, 
respectively.  
The equality \eqref{eq:rho-to-z} holds when $k=0$ 
because $L_{0}=\rho(1, z_{1}w)=z_{1}w=z_{1}\rho(1, w)=R_{0}$. 
Hereafter we assume that $k \ge 1$. 

Divide $L_{k}$ into the three parts 
\begin{align}
L_{k}=\rho(d(\xi_{1}^{k}), z_{1}w)+\sum_{\ell=1}^{k-1}(-1)^{\ell}
\rho(d(\xi_{1}^{k-\ell}), \xi_{1}^{\ell}z_{1}w)+
(-1)^{k}\xi_{1}^{k}z_{1}w.  
\label{eq:lem-I-divide}
\end{align}
Let us rewrite the first part. 
Substitute \eqref{eq:d-recursion} into $d(\xi_{1}^{k})$. 
{}From the definition of $\rho$ we see that the first part is equal to 
\begin{align*}
\sum_{a=1}^{k}\left( 
\xi_{a}\rho(d(\xi_{1}^{k-a}), z_{1}w)+z_{1}\rho(\xi_{a}d(\xi_{1}^{k-a}), w)+z_{a+1}\rho(d(\xi_{1}^{k-a}), w)
\right). 
\end{align*}
Note that the first term of the summand with $a=k$ is equal to $\xi_{k}z_{1}w=\xi_{k}L_{0}$. 
Apply \eqref{eq:d-recursion} again to the second term, and we see that 
the first part of the right hand side of \eqref{eq:lem-I-divide} is equal to  
\begin{align}
\xi_{k}L_{0}+\sum_{a=1}^{k-1}\xi_{a}\rho(d(\xi_{1}^{k-a}), z_{1}w)+R_{k}. 
\label{eq:I-part1}
\end{align}

We proceed the same calculation for the second part of \eqref{eq:lem-I-divide}. 
Here we decompose $\xi_{1}^{\ell}z_{1}w=\xi_{1} \cdot \xi_{1}^{\ell-1}z_{1}w$ and 
use \eqref{eq:d-recursion}.  
As a result we get 
\begin{align}
& 
\sum_{a=1}^{k-1}\sum_{\ell=1}^{k-a}(-1)^{\ell}\xi_{a}\rho(d(\xi_{1}^{k-\ell-a}), \xi_{1}^{\ell}z_{1}w)-
\sum_{a=1}^{k-1}\xi_{a+1}I_{k-a-1} 
\label{eq:I-part2} 
\\ 
&-\sum_{\ell=0}^{k-2}(-1)^{l}\xi_{1}\rho(d(\xi_{1}^{k-1-\ell}), \xi_{1}^{\ell}z_{1}w). 
\nonumber 
\end{align} 

Note that the third part of \eqref{eq:lem-I-divide} is equal to 
\begin{align}
{}-(-1)^{k-1}\xi_{1}\rho(d(\xi_{1}^{0}), \xi_{1}^{k-1}z_{1}w),   
\label{eq:I-part3} 
\end{align}
which is the summand of the third term of \eqref{eq:I-part2} with $\ell=k-1$. 
Hence the three parts \eqref{eq:I-part1}, \eqref{eq:I-part2} and \eqref{eq:I-part3} add up to 
\begin{align*}
\xi_{k}L_{0}+\sum_{a=1}^{k-1}\xi_{a}L_{k-a}+R_{k}-\sum_{a=1}^{k-1}\xi_{a+1}L_{k-a-1}-\xi_{1}L_{k-1}=R_{k}.  
\end{align*}
This completes the proof. 
\end{proof}

Now we can prove the key formula to calculate $Z_{s}(z_{1}^{a})$ for $a \ge 1$: 

\begin{prop}\label{prop:main-1}
Let $w \in \mathfrak{d}_{1}$ and $s \ge 0$. Then 
$Z_{s}(z_{1}w)=\sum_{\ell=0}^{s}z_{\ell+1}Z_{s-\ell}(w)$.  
\end{prop}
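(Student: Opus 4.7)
The plan is to unfold $Z_s(z_1 w)$ from its definition, substitute the recursion \eqref{eq:Phi-recursion} for $\Phi_\ell(z_1 w)$, swap the order of summation, and then invoke Lemma \ref{lem:rho-to-z} to collect the result into the required form.

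Concretely, I would start from
\begin{align*}
Z_s(z_1 w)=\sum_{\ell=0}^{s}\rho\bigl(d(\xi_1^{s-\ell}),\Phi_\ell(z_1 w)\bigr)
\end{align*}
and substitute $\Phi_\ell(z_1 w)=\sum_{j=0}^{\ell}(-\xi_1)^{\ell-j}z_1\Phi_j(w)$ to obtain a double sum indexed by $\ell$ and $j$. Swapping as $\sum_{\ell=0}^{s}\sum_{j=0}^{\ell}=\sum_{j=0}^{s}\sum_{\ell=j}^{s}$ and then setting $m=\ell-j$ converts this into
\begin{align*}
Z_s(z_1 w)=\sum_{j=0}^{s}\sum_{m=0}^{s-j}(-1)^{m}\rho\bigl(d(\xi_1^{s-j-m}),\xi_1^{m}z_1\Phi_j(w)\bigr).
\end{align*}

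The key observation is that, for each fixed $j$, the inner sum is exactly the alternating combination $L_{k}$ (with $k=s-j$ and with $w$ in Lemma \ref{lem:rho-to-z} replaced by $\Phi_j(w)$) that the lemma identifies with $\sum_{m=0}^{k}z_{m+1}\rho\bigl(d(\xi_1^{k-m}),\Phi_j(w)\bigr)$. Applying this and rearranging the double sum with $\ell:=m$ as the outer index, so that $j$ ranges over $0\le j\le s-\ell$, gives
\begin{align*}
Z_s(z_1 w)=\sum_{\ell=0}^{s}z_{\ell+1}\sum_{j=0}^{s-\ell}\rho\bigl(d(\xi_1^{s-\ell-j}),\Phi_j(w)\bigr)=\sum_{\ell=0}^{s}z_{\ell+1}Z_{s-\ell}(w),
\end{align*}
which is the claimed formula.

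The substantive combinatorial content has already been packaged into Lemma \ref{lem:rho-to-z}, whose function is precisely to push the prefix $\xi_1^{m}z_1$ out as a single factor $z_{m+1}$ at the front. The present proposition therefore reduces to bookkeeping on summation ranges; the one subtle point to watch is that the alternating signs $(-1)^{\ell-j}$ produced by \eqref{eq:Phi-recursion} must line up exactly with the alternating signs absorbed by Lemma \ref{lem:rho-to-z}, and beyond this I do not anticipate any further obstacle.
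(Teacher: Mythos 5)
Your argument is correct and is essentially identical to the paper's own proof: expand $Z_{s}(z_{1}w)$ via \eqref{eq:Phi-recursion}, exchange the order of summation, and apply Lemma \ref{lem:rho-to-z} termwise to produce the factors $z_{\ell+1}$. Your remark about the signs is well taken — the displayed statement of Lemma \ref{lem:rho-to-z} omits the factor $(-1)^{\ell}$ that its proof (the quantity $L_{k}$) and this application both carry — and you read the lemma in the intended, signed form, so nothing further is needed.
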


\begin{proof}
Using \eqref{eq:Phi-recursion} we have 
\begin{align*}
Z_{s}(z_{1}w)=\sum_{\ell=0}^{s}\sum_{j=0}^{l}(-1)^{\ell-j}
\rho(d(\xi_{1}^{s-\ell}), \xi_{1}^{\ell-j}z_{1}\Phi_{j}(w)). 
\end{align*} 
Because of Lemma \ref{lem:rho-to-z} it is equal to 
\begin{align*}
\sum_{j=0}^{s}\sum_{\ell=0}^{s-j}z_{\ell+1}\rho(d(\xi_{1}^{s-j-\ell}), \Phi_{j}(w))=
\sum_{\ell=0}^{s}z_{\ell+1}Z_{s-l}(w).  
\end{align*}
This completes the proof. 
\end{proof}

Combining Proposition \ref{prop:main-2} and Proposition \ref{prop:main-1}, 
we obtain the following formula: 

\begin{prop}\label{prop:main-3}
For $s \ge 0$ and $a \ge 1$, it holds that 
\begin{align*}
Z_{s}(z_{1}^{a})=\sum_{\gamma \in I(a, s+a)}
z_{\gamma_{1}} \cdots z_{\gamma_{a}}. 
\end{align*}
\end{prop}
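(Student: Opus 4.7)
The plan is to prove Proposition \ref{prop:main-3} by induction on $a \ge 1$, using Proposition \ref{prop:main-1} as the recursive step and Proposition \ref{prop:main-2} to initialize.

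For the base case $a=1$, I would apply Proposition \ref{prop:main-1} with $w=1$ to get
\begin{align*}
Z_{s}(z_{1}) = Z_{s}(z_{1}\cdot 1) = \sum_{\ell=0}^{s} z_{\ell+1}\, Z_{s-\ell}(1).
\end{align*}
By Proposition \ref{prop:main-2}, only the term $\ell = s$ survives, giving $Z_{s}(z_{1}) = z_{s+1}$, which matches the right-hand side since $I(1, s+1) = \{(s+1)\}$.

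For the inductive step, assume the formula holds for $a-1$ (and all $s \ge 0$). Then by Proposition \ref{prop:main-1},
\begin{align*}
Z_{s}(z_{1}^{a}) = \sum_{\ell=0}^{s} z_{\ell+1}\, Z_{s-\ell}(z_{1}^{a-1})
= \sum_{\ell=0}^{s} z_{\ell+1} \sum_{\gamma' \in I(a-1,\, s-\ell+a-1)} z_{\gamma'_{1}} \cdots z_{\gamma'_{a-1}}.
\end{align*}
The key combinatorial observation is that the map $(\ell, \gamma') \mapsto \gamma := (\ell+1, \gamma'_{1}, \ldots, \gamma'_{a-1})$ is a bijection between pairs $(\ell, \gamma')$ with $0 \le \ell \le s$ and $\gamma' \in I(a-1, s-\ell+a-1)$, and indices $\gamma \in I(a, s+a)$. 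Indeed, any $\gamma \in I(a,s+a)$ satisfies $\gamma_1 \ge 1$ and $\gamma_1 = (s+a) - \sum_{i=2}^{a}\gamma_i \le (s+a) - (a-1) = s+1$, so setting $\ell = \gamma_1 - 1$ lies in $[0,s]$, and the tail has weight $s-\ell+a-1$ and depth $a-1$. Under this bijection the sum becomes $\sum_{\gamma \in I(a, s+a)} z_{\gamma_{1}} \cdots z_{\gamma_{a}}$, as desired.

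There is no substantive obstacle here: once Propositions \ref{prop:main-1} and \ref{prop:main-2} are in hand, the proof is a clean induction, and the only point requiring care is the bijection argument verifying that the range $0 \le \ell \le s$ exactly matches the constraint $\gamma_{1} \ge 1$ combined with admissibility of the remaining $a-1$ indices summing to $s-\ell+a-1$. The real work of the argument has already been absorbed into Proposition \ref{prop:main-1}, whose proof relies on the commutation identity of Lemma \ref{lem:rho-to-z} between $\rho(d(\xi_1^{\bullet}), -)$ and left multiplication by $z_1$.
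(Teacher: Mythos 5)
Your proof is correct and is exactly the argument the paper intends: the paper states Proposition \ref{prop:main-3} as an immediate consequence of Propositions \ref{prop:main-1} and \ref{prop:main-2} without writing out details, and your induction on $a$ with the bijection $(\ell,\gamma')\mapsto(\ell+1,\gamma'_1,\ldots,\gamma'_{a-1})$ is the straightforward way to make that combination precise.
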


At last let us prove Theorem \ref{thm:main} in the case of $a\ge 1$. 
{}From Proposition \ref{prop:main-0} and Proposition \ref{prop:main-3}, 
it holds that 
\begin{align*}
\sum_{\alpha \in I_{0}(b, n)}\zeta_{q}(\alpha_{1}, \ldots , \alpha_{b}, 1^{a})=
\sum_{s=0}^{b-1}\sum_{\gamma \in I(a, s+a)}
\zeta_{q}(n-s-1, \gamma_{1}, \ldots , \gamma_{a}). 
\end{align*}
Set $\beta_{1}=b+1-s$. The right hand side becomes 
\begin{align*}
& 
\sum_{\beta_{1}=2}^{b+1}\sum_{\gamma \in I(a, a+b+1-\beta_{1})}
\zeta_{q}(\beta_{1}+n-b-1, \gamma_{1}, \ldots , \gamma_{a}) \\ 
&=\sum_{\beta \in I_{0}(a+1, a+b+1)}
\zeta_{q}(\beta_{1}+n-b-1, \beta_{2}, \ldots , \beta_{a+1}). 
\end{align*} 
This completes the proof of Theorem \ref{thm:main}. 

\medskip 
\section*{Acknowledgments}

The research of the author is supported by Grant-in-Aid for 
Young Scientists (B) No.\,23740119. 
The author is grateful to Yasuo Ohno for helpful informations.

\end{document}